\def\ps@pprintTitle{%
  \let\@oddhead\@empty
  \let\@evenhead\@empty
  \def\@oddfoot{\reset@font\hfil\thepage\hfil}
  \let\@evenfoot\@oddfoot
}
\newenvironment{breakablealgorithm}
  {
   \begin{center}
     \refstepcounter{algorithm}
     \hrule height.8pt depth0pt \kern2pt
     \renewcommand{\caption}[2][\relax]{
       {\raggedright\textbf{\ALG@name~\thealgorithm} ##2\par}%
       \ifx\relax##1\relax 
         \addcontentsline{loa}{algorithm}{\protect\numberline{\thealgorithm}##2}%
       \else 
         \addcontentsline{loa}{algorithm}{\protect\numberline{\thealgorithm}##1}%
       \fi
       \kern2pt\hrule\kern2pt
     }
  }{
     \kern2pt\hrule\relax
   \end{center}
  }
\pgfplotsset{compat=newest}
\numberwithin{equation}{section}
\newcounter{rowno}
\newcommand{\mi}{\texttt{m}}
\renewcommand{\ni}{\texttt{n}}
\newcommand{\mn}{\mi\ni}
\newcommand{\mnp}{\mi\ni,\v p}
\renewcommand{\v}[1]{\textbf{#1}}
\newcommand{\p}{\partial}
\newcommand{\wt}{\widetilde}
\newcommand{\wh}{\widehat}
\newcommand{\bx}{\textbf{x}}
\newcommand{\ii}{\textrm{i}}
\newcommand{\erfc}{\text{erfc}}
\newcommand{\erf}{\text{erf}}
\newcommand{\order}[1]{{\cal O}(#1)}
\newcommand{\n}{\textsf{n}}
\newcommand{\e}{{\mathcal{E}}}
\renewcommand{\L}{\mathrm{L}}
\renewcommand{\S}{\mathrm{S}}
\renewcommand{\c}{\mathrm{c}}
\renewcommand{\P}{\mathrm{P}}
\newcommand{\E}{\mathrm{E}}
\newcommand{\F}{\mathrm{\v F}}
\newcommand{\SE}{{_\mathrm{SE}}}
\newcommand{\SPME}{{_\mathrm{SPME}}}
\newcommand{\rms}{\mathrm{rms}}
\newcommand{\diff}{\mathrm{d}}
\newtheorem{theorem}{Theorem}
\newtheorem{remark}{Remark}
\newcommand{\etal}{\textit{et al.\ }}
\newcounter{subeqn} %
\begin{document}
\begin{frontmatter}
\title{A comparison of the Spectral Ewald and Smooth Particle Mesh Ewald methods in GROMACS}
\author[aut]{Davood Saffar Shamshirgar}
\ead{davoudss@kth.se}

\author[autf]{Berk Hess}
\ead{hess@kth.se}

\author[aut]{Anna-Karin Tornberg}
\ead{akto@kth.se}

\address[aut]{KTH Mathematics, Swedish e-Science
  Research Centre, 100 44
  Stockholm, Sweden.}
\address[autf]{KTH Computational Physics, Swedish e-Science
  Research Centre, 106 91
  Stockholm, Sweden.}


\begin{abstract}
The smooth particle mesh Ewald (SPME) method is an FFT based method
for the fast evaluation of electrostatic interactions under periodic
boundary conditions. A highly optimized implementation of this method
is available in GROMACS, a widely used software for molecular
dynamics simulations. In this article, we compare a more recent method
from the same family of methods, the spectral Ewald (SE) method, to
the SPME method in terms of performance and efficiency. We consider
serial and parallel implementations of both methods for single and
multiple core computations on a desktop machine as well as the Beskow
supercomputer at KTH Royal Institute of Technology. The
implementation of the SE method has been well optimized, however not
yet comparable to the level of the SPME implementation that has been
improved upon for many years.  We show that the SE method
is very efficient whenever used to achieve high accuracy and that it
already at this level of optimization can  be competitive for low
accuracy demands.
\end{abstract}

\begin{keyword}
Fast Ewald summation\sep Fast Fourier transform\sep Coulomb potentials \sep Smooth particle mesh Ewald \sep Spectral Ewald \sep GROMACS.
\end{keyword}

\end{frontmatter}

\section{Introduction}
\label{section:introduction}
In molecular dynamics simulations (MD), the most time consuming task
is often the computation of the electrostatic interactions. This remains
true despite the fact that many algorithms exist that can reduce the
computational cost from $\order{N^2}$ to $\order{N \log N}$ for evaluating the
interaction of $N$ particles under the common assumption of periodic
boundary conditions. Therefore, this can not be considered a solved problem, and
improvements are highly desirable.

The problem can be stated as follows. Suppose $N$ charged particles with charges $q_{\ni}$ are located in positions $\v x_{\ni}$, $\ni=1,\ldots,N$, and interacting in a simulation box $\Omega=[0,L)^3$. Moreover, assume the so-called \textit{charge neutrality} condition, $\sum_{\ni=1}^N q_{\ni}=0$. This condition is added to avoid the divergence of the electric potential energy in the presence of periodic boundary conditions. The objective is to compute the electrostatic potential,
\begin{equation}
\varphi(\v x_{\mi})=\sum_{\v p}^{'}\sum_{\ni=1}^N\frac{q_{\ni}}{|\v x_{\mnp}|},\quad \v x_{\mnp}=\v x_{\mi\ni}+\v p L=\v x_{\mi}-\v x_{\ni}+\v p L,\quad\v p\in\mathbb{Z}^3,
\label{eq:intro:potential}
\end{equation}
or two related quantities, the electrostatic potential energy,
\begin{equation}
E=\frac{1}{2}\sum_{\mi=1}^Nq_{\mi}\varphi(\v x_{\mi}),
\label{eq:intro:energy}
\end{equation}
and the electrostatic force,
\begin{equation}
\v F(\v x_{\mi})=-\frac{\p E}{\p \v x_{\mi}}=-\frac{1}{2}q_{\mi}\frac{\p \varphi(\v x_{\mi})}{\p \bx_{\mi}},
\label{eq:intro:force}
\end{equation}
$\mi=1,\ldots,N$. The sum over $\v p$ is over all periodic images of particles and the prime ($'$) denotes that the term with $\lbrace \mi=\ni,\v p=0\rbrace$ is excluded from the sum. The electrostatic potential \eqref{eq:intro:potential} decays as $1/r$ (inverse of the distance between particles) and is very slow to compute in this form. Moreover, the sum is only conditionally convergent and the result depends on the order of summation. If the sum is evaluated in a spherical order, it yields the same result as the Ewald potential \cite{smith}.
The \textit{Ewald summation} technique proposed by Ewald \cite{ewald} in 1921 can be used to compute the conditionally convergent sums of \eqref{eq:intro:potential}-\eqref{eq:intro:force} by decomposition of $1/r$ into two parts as 
\begin{equation}
\frac{1}{r}=\frac{\erfc(\xi r)}{r} + \frac{\erf(\xi r)}{r},
\label{eq:intro:decomposition}
\end{equation}
where $\erf(\cdot)$ and $\erfc(\cdot)$ are the \textit{error function} and the \textit{complementary error function} respectively and $\xi>0$ is called the \textit{Ewald parameter}. The first term in \eqref{eq:intro:decomposition} tends to zero quickly as $r\to\infty$. The second term, on the other hand, is a smoothly varying function with a rapidly converging representation in Fourier space.
Using the decomposition \eqref{eq:intro:decomposition} we can rewrite \eqref{eq:intro:potential} and \eqref{eq:intro:energy} as
\begin{align}
\varphi(\v x_{\mi}) =& \varphi^\S_{\mi}+\varphi^\L_{\mi}+\varphi^{\mathrm{self}}_{\mi} \nonumber \\
=&\sum_{\ni}\sum_{\v p}^{'}q_{\ni}\frac{\erfc(\xi |\v x_{\mnp}|)}{|\bx_{\mnp}|}+\frac{4\pi}{L^3}\sum_{\v k\neq0}\frac{e^{-k^2/4\xi^2}}{k^2}\sum_{\ni}q_{\ni}e^{\ii\v k\cdot\v x_{\mn}}-\frac{2\xi}{\sqrt{\pi}}q_{\mi},
\label{eq:intro:potential_ewald}\\
E =& E^\S+E^\L+E^{\mathrm{self}} \nonumber \\
=&\frac{1}{2}\sum_{\mi}\sum_{\ni}\sum_{\v p}^{'}q_{\mi}q_{\ni}\frac{\erfc(\xi |\v x_{\mnp}|)}{|\v x_{\mnp}|} \nonumber \\
&+\frac{4\pi}{2L^3}\sum_{\v k\neq0}\frac{e^{-k^2/4\xi^2}}{k^2}\sum_{\mi}\sum_{\ni}q_{\mi}q_{\ni}e^{\ii\v k\cdot\v x_{\mn}} 
-\frac{\xi}{\sqrt{\pi}}\sum_{\mi}q_{\mi}^2.
\label{eq:intro:energy_ewald}
\end{align}
The electrostatic force can also be obtained by differentiating the energy equation \eqref{eq:intro:energy_ewald} with respect to $\v x_\mi$,
\begin{align}
\v F(\v x_{\mi})=&F^\S+F^\L \nonumber \\
=&q_{\mi}\sum_{\ni}q_{\ni}\sum_{\v p}^{'}\left(\frac{2\xi}{\sqrt{\pi}}e^{-\xi^2|\v x_{\mnp}|^2}+\frac{\erfc(\xi|\v x_{\mnp}|)}{|\v x_{\mnp}|}\right)\frac{\v x_{\mnp}}{|\bx_{\mnp}|^2} \hspace{2.5cm} \nonumber \\
&-\frac{4\pi \ii q_{\mi}}{2L^3}\sum_{\v k\neq0}\frac{\v k e^{-k^2/4\xi^2}}{k^2}\sum_{\ni}q_{\ni} e^{\ii\v k\cdot \v x_{\mn}},
\label{eq:intro:force_ewald} 
\end{align}
where $\v k\in\lbrace 2\pi \v n/L,\v n\in\mathbb{Z}^3\rbrace$ and $k=|\v k|$. In \eqref{eq:intro:potential_ewald}-\eqref{eq:intro:force_ewald}, superscripts \textit{S}, \textit{L} and \textit{self} refer to \textit{Short range} (computed in real space), \textit{Long range} (computed in Fourier space) and \textit{Self interaction contribution} respectively. The decomposition parameter $\xi$, controls the convergence of the two sums in a way that larger $\xi$ results in a faster convergence of the real space sum in expense of slow convergence of the Fourier space sum. Furthermore, equations \eqref{eq:intro:potential_ewald}-\eqref{eq:intro:force_ewald} are exact in the current form and their results are invariant to the Ewald parameter.
In a charge neutral system, the mean-value of the potential will be zero and, therefore, the zero mode of the Fourier transformed potential is set to zero, i.e.\ $\wh{\varphi}(\v k=0)=\langle\varphi^L(\v x)\rangle=0$. 

In the formulas above, the real space sum can be evaluated ignoring
interactions between particles further apart than a given distance 
such that the $\erfc(\cdot)$ function is well enough decayed. The Fourier space sum can be evaluated using various methods that differ in structure. Among all methods proposed for the Fourier space sum, are the fast Fourier transform (FFT) based methods \cite{pme,p3m,spme,hockneyeastwood,se}, non-FFT based or Hierarchical methods \cite{fmm} and PDE-based methods \cite{pdebased}. These methods are all widely used in major software packages such as GROMACS \cite{gromacs}, AMBER \cite{amber}, NAMD \cite{namd}, and SCAFACOS \cite{scafacos}. 

The aim of FFT-based methods is to use the FFT in order to accelerate
 the calculation of the Fourier space sum. This allows for a different
 choice of the Ewald parameter to shift more work to the Fourier part,
 and decrease it in the real space calculations. With a proper scaling,
 these methods reduce the total complexity of the summation to $\order{N \log N}$.

Among the most important methods which benefit from the FFT, are methods within the Particle-Mesh Ewald (PME) family, including the original PME method by Darden \etal \cite{pme}, the SPME method by Essmann \etal \cite{spme}, Particle-Particle-Particle-Mesh Ewald (P$^3$M) originally introduced by Hockney and Eastwood \cite{hockneyeastwood}, its variant Interlaced P$^3$M by Neelov and Holm \cite{interlaced}, and Particle-Mesh NFFT by Pippig and Potts \cite{pnfft}. In this paper, we review another method of the PME family, the Spectral Ewald (SE) method introduced by Lindbo $\&$ Tornberg \cite{se}, and compare it to the SPME method. 

We provide a comparison between the spectral
Ewald (SE) and smooth particle mesh Ewald (SPME) methods in GROMACS
for different systems of particles under three-dimensional periodic
boundary conditions. We discuss similarities and differences,
advantages and disadvantages for the two methods, including the
approximation errors that they introduce and the computational cost
that is incurred for different parts of the algorithms.

The evaluation of the real space sum is the same in the two methods. Therefore, 
we only study the runtime estimate of this sum and do not discuss the 
details of the algorithm used for this calculation. 
In the evaluation of the $k$-space sum, a regular FFT grid is
introduced. The difference between the two methods lies in the choice
of window functions that are used to
interpolate the point charges onto this grid in the first step of the
algorithm, as well as to interpolate back to the irregular point
locations from grid values at the end of the calculations - the other
differences follow as a consequence of this choice.

In the SE method, the window function is a
suitably scaled and truncated Gaussian. Given the number of points in
the support of the Gaussian and the grid size, parameters are adjusted
to scale it optimally. This way, approximation errors introduced in
the SE method are essentially independent of the grid size, and the
size of the FFT grid can be chosen considering only the truncation of
the original Ewald $k$-space sum.

In contrast, the SPME method works with a $p$th order cardinal B-spline
as the window function, which introduces an
approximation error of order $h^p$, where $h$ is the grid size of the FFT
grid. Hence, the size of the FFT grid must be in general larger than the 
truncation of the $k$-space sum (more points, smaller $h$) to reduce the 
approximation errors.

In terms of computational cost to obtain a certain accuracy, the cost
from computing FFTs will hence be smaller for the SE method as
compared to the SPME method, since smaller grids will be used. On the
other hand, the number of points in the support of the Gaussian is
larger than that of the support of the cardinal B-splines, and hence
the cost of interpolating to and from the grid will be larger for the
SE method. In terms of which method that offers the lowest
computational cost, this will depend on systems under study and parameters, as well
as the quality of the implementation. Generally speaking, the SE
method will outperform the SPME method if high accuracy is required,
whereas the SPME method will be more efficient if only a few digits of
accuracy is asked for.

We have implemented the SE method in GROMACS ver 5.1 \cite{gromacs}, using
the same routines for the real space sum and for evaluating FFTs
as the SPME method. The remaining implementation of the SE method has
been well optimized, even if not as fine tuned as the implementation
for the SPME method that has been available in GROMACS since late 1999. Hence,
the computational runtimes for SE relative to SPME can be expected to be
reduced by additional optimization of the implementation.

This article is organized as follows. In section \ref{section:truncation} we review
truncation error estimates in Ewald sums. The structure of the
FFT-based methods is reviewed in section \ref{section:fftbased}. Then, in section \ref{section:se}, we
review the SE method and consider errors in the SE and SPME methods
\cite{spme} in section \ref{section:error_se} and \ref{section:error_spme} respectively. The reader that is familiar with
the SPME method can get a quick overview of the differences in
implementation between the two methods by glancing at table \ref{tab:compare} in
section \ref{subsection:methodology}. In section \ref{section:runtime_estimate} we discuss runtime estimates of the SE
method in GROMACS. The numerical results for the serial and parallel
implementations are presented in sections \ref{section:serial} and  \ref{section:parallel}. 

\section{Ewald summation method}
\subsection{Truncation error estimates}
\label{section:truncation}
The sums in \eqref{eq:intro:potential_ewald} are all infinite and need to be truncated in practice. The error committed due to this truncation is discussed in this section. To consider one measure of accuracy, we use the root mean square (rms) error,
\begin{align*}
e_{\rms}^2:=\dfrac{1}{N}\sum_{\ni=1}^N (\varphi(\v x_{\ni})-\varphi^\ast(\v x_{\ni}))^2,
\end{align*}
where $\varphi^\ast$ denotes an exact or a well converged reference solution. Choosing a cut-off radius $r_\c<L/2$, the real space sum in \eqref{eq:intro:potential_ewald}-\eqref{eq:intro:force_ewald} is truncated such that the sum only includes the terms for which $|\v x_{\mnp}|\leq r_\c$. Using the well established Kolafa $\&$ Perram error estimates \cite{kolafa}, the real space truncation errors are estimated as
\begin{subequations}
\label{eq:truncation:kolafa_real}
\begin{align}
e_{\P,\rms}^\S &\approx (Qr_\c/2L^3)^{1/2}(\xi r_\c)^{-2}e^{-r_\c^2\xi^2},\\
e_{\E,\rms}^\S &\approx Q(r_\c/2L^3)^{1/2}(\xi r_\c)^{-2}e^{-r_\c^2\xi^2},\\
e_{\F,\rms}^\S &\approx 2Q(1/r_\c L^3)^{1/2}e^{-r_\c^2\xi^2},
\end{align}
\end{subequations}
for the potential, energy and force respectively and with $Q:=\sum_{\ni=1}^Nq_{\ni}^2$ . The Fourier space sum includes an exponential term of the form $e^{-k^2/4\xi^2}$ which decays exponentially as $k$ increases. This sum can also be truncated at some maximum wave number $k_{\infty}\in\mathbb{Z}^+$ such that it only includes the terms for which $|\v k|\leq 2\pi k_{\infty}/L$. The Fourier space truncation errors are also estimated as \cite{kolafa},
\begin{subequations}
\label{eq:truncation:kolafa_fourier}
\begin{align}
e_{\P,\rms}^\L &\approx \xi\pi^{-2}k_{\infty}^{-3/2}\sqrt{Q}e^{-(\pi k_{\infty}/\xi L)^2}, \\
e_{\E,\rms}^\L &\approx \xi\pi^{-2}k_{\infty}^{-3/2}Q e^{-(\pi k_{\infty}/\xi L)^2}, \\
e_{\F,\rms}^\L &\approx \xi(L \pi)^{-1}(8/k_{\infty})^{1/2}Q e^{-(\pi k_{\infty}/\xi L)^2}.
\end{align}
\end{subequations}
Given a certain error tolerance $\e$ and a cut-off radius $r_\c$, the Ewald parameter $\xi$ can be obtained using \eqref{eq:truncation:kolafa_real}
\begin{subequations}
\begin{align}
\xi_\P &=\frac{1}{r_\c}\left[ W\left(\frac{1}{\e}\sqrt{\frac{Qr_\c}{2L^3}}\right)\right]^{1/2}, \\
\xi_\E &=\frac{1}{r_\c}\left[ W\left(\frac{1}{\e}Q\sqrt{\frac{r_\c}{2L^3}}\right)\right]^{1/2}, \\
\xi_\F &=\frac{1}{r_\c}\left[\log\left(\frac{2Q}{\e}\sqrt{\frac{1}{r_\c L^3}}\right)\right]^{1/2}.
\end{align}
\label{eq:truncation:kolafa_xi}%
\end{subequations}
Here, $W(\cdot)$ denotes the \textit{Lambertw} function and is defined as the inverse of $f(x)=xe^{-x}$. Having $\xi$ in hand we find $k_{\infty}$ through
\begin{subequations}
\begin{align}
k_{\P,\infty} &=\frac{\sqrt{3}L\xi}{2\pi}\left[ W\left( \frac{4}{3L^2}(\frac{Q}{\pi\xi\e^2})^{2/3} \right)\right]^{1/2}, \\
k_{\E,\infty} &=\frac{\sqrt{3}L\xi}{2\pi}\left[ W\left( \frac{4}{3L^2}(\frac{Q^2}{\pi\xi\e^2})^{2/3} \right)\right]^{1/2}, \\
k_{\F,\infty} &=\frac{L\xi}{2\pi} \left[ W\left( \frac{2^8\xi^2Q^4}{\e^4L^6\pi^2} \right)\right]^{1/2},
\end{align}
\label{eq:truncation:kolafa_k_inf}%
\end{subequations}
for the potential, energy and force. For a fixed cut-off $r_\c$ and an error tolerance $\e$, we have $\xi_\P\leq \xi_\E \leq \xi_\F$ as given by
(\ref{eq:truncation:kolafa_xi}a-c). Based on the relations in (\ref{eq:truncation:kolafa_k_inf}a-c) and for given values of $\xi$ and $\e$, we have that, $k_{\P,\infty}$ is the smallest among all three and $k_{\E,\infty}$ and $k_{\F,\infty}$ are comparable.

As was mentioned before, the Ewald parameter controls the relative decay of the real space and Fourier space sums. For a fixed error tolerance $\e$, allowing for a larger $r_\c$ will yield a smaller $\xi$, and in turn, a smaller $k_{\infty}$, decreasing the computational cost of the Fourier space sum. The optimal $\xi$ is chosen such that the computational cost of evaluating the real and Fourier space sums are balanced. This however depends on the algorithm, implementation and hardware used for simulations.

\subsection{Computational complexity}
\label{section:complexity}
The direct evaluation of the Ewald sums \eqref{eq:intro:potential_ewald}-\eqref{eq:intro:force_ewald}, has a complexity of $\order{N^2}$, where $N$ is the number of particles in the system. With a proper scaling of $\xi$ with $N$, and assuming a uniform distribution of charges, the computational cost of the Ewald sum can be reduced to ${\cal O}(N^{3/2})$ \cite{perram}, though, with a large constant. In the next section we will consider methods that reduce the complexity to $\order{N\log{N}}$, also with a much smaller constant.

\section{FFT-based methods}
\label{section:fftbased}
As mentioned earlier, the aim of FFT-based methods is to use the FFT
in order to accelerate the calculation of the Fourier space sum. These
methods reduce the total complexity of the summation from $\order{N^2}$ (or
$\order{N^{3/2}}$ by optimizing the involved parameters) to $\order{N\log{N}}$ with a
proper scaling of the Ewald parameter. The FFT acceleration allows for a
larger $\xi$, corresponding to a smaller $r_\c$, and thereby reduces the cost
of evaluating the real space sum. We shall get back to this discussion
later in section \ref{section:se:complexity} when we present the computational complexity of the SE method.

The principles of all FFT-based methods within the Particle-Mesh Ewald
(PME) family, including the SPME and SE methods, are the same. All methods are inherited from the original P$^3$M and PME methods. In all these methods, the evaluation is conducted in Fourier space by some modification to make possible the use of the FFT efficiently. Considering the evaluation of the Fourier space part of the potential ($\varphi^\L_{\mi}$ in \eqref{eq:intro:potential_ewald}), the resulting algorithms have the following general steps,

{
\small
\begin{breakablealgorithm}
\caption{The structure of FFT-based Ewald summation methods}
\small
\label{alg:fftbased}
\begin{algorithmic}[1]
\State A uniform grid is introduced.
\State Spreading step: Using a suitable window function, the charges of the non-uniform particles are distributed on the uniform grid.
\State A 3D FFT is performed on the grid function.
\State The grid function is scaled by a multiplication with a modified Fourier transformed Green's function in Fourier space.
\State A 3D inverse FFT is performed.
\State Gathering step: The potential (force or energy) is evaluated at targets with the same window function as in the spreading step.
\end{algorithmic}
\end{breakablealgorithm}
}

The spreading and gathering steps can be performed using different window functions, resulting in different flavors of methods with different accuracies and computational costs. The modified Green's function in step 4 depends on the choice of the window function in steps 2 and 6. In all FFT-based methods, the Fourier space truncation error is determined by the number of grid points as given by the Kolafa $\&$ Perram error estimates, but the approximation errors depend on the details of the methods. We discuss more on the approximation errors of the SE and SPME methods in sections \ref{section:error_se:approximation_error} and \ref{section:error_spme:approximation_error}.

\subsection{Force calculation}
\label{subsec:force_calc}
Referring back to \eqref{eq:intro:force}, the calculation of the electrostatic force involves differentiation of the electrostatic energy. The differentiation can be performed in three different ways which differ in accuracy and computational cost. In the first approach, used in the original PME method, differentiation is done with $\ii\v k$ multiplication in Fourier space. This is the most accurate approach to evaluate the force but it is computationally very expensive as it needs two extra 3D IFFTs. The second way of obtaining the force, used in the original P$^3$M method, is to perform a discrete differentiation on the mesh in real space. This method, however, is only useful for low accuracy demands. The last approach is to analytically differentiate the charge assignment function in real space. This method is employed in the SPME method and since it only requires one 3D IFFT, it is cheaper to perform as compared to the $\ii \v k$ differentiation. Another important advantage of the latter approach is that, as a result of analytic differentiation of the energy, and independent of the truncation errors, the resulting scheme conserves energy. This, in addition to the performance, is an important reason for us to choose this scheme for the SE method as well (see section \ref{section:se:force}).

\section{The Spectral Ewald method}
\label{section:se}
Here, we review the Spectral Ewald (SE) method \cite{se} for electrostatic calculations
under fully periodic boundary conditions. As mentioned before, the method has
the structure of all PME family methods but unlike other methods in the family, it is
spectrally accurate. Consider the evaluation of the long range part of
the electrostatic potential in \eqref{eq:intro:potential_ewald},
\begin{align}
\varphi^\L(\v x_{\mi})=\frac{4\pi}{L^3}\sum_{\v k\neq0}\frac{e^{-k^2/4\xi^2}}{k^2}\sum_{\ni}q_{\ni}e^{\ii\v k\cdot\v x_{\mn}},
\label{eq:se:potential}
\end{align}
under the assumptions made in section \ref{section:introduction}. In the SE method, suitably scaled (and truncated) Gaussian functions are used in the spreading
and gathering steps of Algorithm 1.  To derive the relevant formulas,
a free parameter $\eta\in(0,1)$ is introduced to split the Gaussian factor
$e^{-k^2/4\xi^2}$ as,
\begin{align*}
e^{-k^2/4\xi^2}=\underbrace{e^{-\eta k^2/8\xi^2}}_a\underbrace{e^{-(1-\eta)k^2/4\xi^2}}_b\underbrace{e^{-\eta k^2/8\xi^2}}_c.
\end{align*}
The term $(a)$ will be used in the spreading step, $(b)$ for multiplication in Fourier space and $(c)$ in the gathering step. We start by writing \eqref{eq:se:potential} as,
\begin{align}
\varphi^\L(\v x_{\mi})&=\frac{4\pi}{L^3}\sum_{\v k\neq0}\frac{e^{-k^2/4\xi^2}}{k^2}e^{\ii\v k\cdot\v x_{\mi}}\sum_{\ni}q_{\ni}e^{-\ii\v k\cdot\v x_{\ni}} \nonumber \\
&=\frac{4\pi}{V}\sum_{\v k\neq0}\frac{e^{-(1-{\eta})k^2/4\xi^2}}{k^2}e^{\ii\v k\cdot\v x_{\mi}}e^{-{\eta} k^2/{8}\xi^2}\sum_{\ni}q_{\ni}e^{-{\eta} k^2/{8}\xi^2}e^ {-\ii\v k\cdot\v x_{\ni}}.
\label{eq:se:potential_split}
\end{align}
Now, \eqref{eq:se:potential_split} can be written as
\begin{align}
\varphi^\L(\v x_{\mi})=\frac{4\pi}{L^3}\sum_{\v k\neq0}\frac{e^{-(1-\eta)k^2/4\xi^2}}{k^2}e^ {\ii\v k\cdot\v x_{\mi}}e^{-\eta k^2/8\xi^2}\wh{H}_{-k},
\label{eq:se:potential_split_Hhat}
\end{align}
where
\begin{align}
\wh{H}_k:=\sum_{\ni}q_{\ni}e^{-\eta k^2/ 8\xi^2}e^{\ii\v k\cdot\v x_{\ni}}.
\label{eq:se:Hhat}
\end{align}
The inverse Fourier transform of \eqref{eq:se:Hhat} is given by
\begin{align}
H(\v x)=\sum_{\ni}q_{\ni}\left(\frac{2\xi^2}{\pi\eta}\right)^{3/2}e^{-2\xi^2|\v x-\v x_{\ni}|_{\ast}^2/\eta},
\label{eq:se:grid}
\end{align}
where $^{\ast}$ denotes that periodicity is applied in all directions. The function $H(\v x)$ is a smooth and periodic function, evaluated on a uniform grid. Algorithmically, evaluating $H(\v x)$ corresponds to the spreading step, step 2 in Algorithm 1. Since the charges are distributed on a uniform grid, an FFT can be applied to compute $\wh{H}_k$ efficiently. The Fourier transformed smeared particle function $\wh{H}_k$, are then scaled by $k^{-2}e^{-(1-\eta)k^2/4\xi^2}$ (step 4 in Algorithm 1). We define
\begin{align}
\wh{\wt{H}}_k:=\frac{e^{-(1-\eta)k^2/4\xi^2}}{k^2}\wh{H}_{k},
\label{eq:se:scaling}
\end{align}
and with this, \eqref{eq:se:potential_split_Hhat} is simplified to
\begin{align}
\varphi^\L(\v x_{\mi})=\frac{4\pi}{L^3}\sum_{\v k\neq0}\wh{\wt{H}}_{-k}e^{-\eta k^2/8\xi^2}e^{\ii\v k\cdot\v x_{\mi}}.
\label{eq:se:Htildehat}
\end{align}
Then, as a result of exerting the Parseval theorem on \eqref{eq:se:Htildehat}, one obtains
\begin{align}
\varphi^\L(\v x_{\mi})=4\pi\left(\frac{2\xi^2}{\pi\eta}\right)^{3/2}\int_{\Omega} \wt{H}(\v x)e^{-2\xi^2|\v x-\v x_{\mi}|_{\ast}^2/\eta}\diff\v x.
\label{eq:se:potential_integral}
\end{align}
This interpolation, corresponds to the gathering step, step 6 in Algorithm 1. Finally, the integral in \eqref{eq:se:potential_integral} can be evaluated by the trapezoidal rule,
\begin{align}
\varphi^\L_\SE(\v x_{\mi})= 4\pi h^3\left(\frac{2\xi^2}{\pi\eta}\right)^{3/2}\sum_{\ni} \wt{H}(\v x_{\ni})e^{-2\xi^2|\v x_{\ni}-\v x_{\mi}|_{\ast}^2/\eta}.
\label{eq:se:potential_discrete}
\end{align}
Due to the fact that $\wt{H}(\v x)$ is smooth and periodic (as a result of periodicity and smoothness of $H$), the trapezoidal rule yields spectral accuracy. Moreover, $\wt{H}(\v x)$ in \eqref{eq:se:potential_discrete} can be evaluated by applying an inverse FFT on $\wh{\wt{H}}$. Note here that, the same interpolation function used in the spreading \eqref{eq:se:grid} and gathering \eqref{eq:se:potential_integral} steps. The reader may consult \cite{se} for more details on the derivation of the SE method. The formulas that will be used in the different steps of Algorithm 1
are summarized in table \ref{tab:compare} of section \ref{subsection:methodology}, side by side with the
formulas needed for the SPME method.

Consider a uniform grid with $M$ points in each direction, where $M$ is related to the maximum Fourier mode $k_{\infty}$ by $M=2k_{\infty}$. The Gaussians do not have compact support, and hence they have to be truncated. Let $P$, where $P\leq M$, be the number of points in the support of Gaussians in each direction. Also we denote by $w=Ph/2=PL/2M$ the half width of the Gaussians. Earlier in this section, we introduced the parameter $\eta$ which controls the width of Gaussians. At the point of truncating the Gaussians, $|\v x-\v x_{\ni}|_{\ast}=w$, assume that the term $e^{-2\xi^2|\v x-\v x_{\ni}|_{\ast}^2/\eta}$ has decayed to $e^{-m^2/2}$. Hence, the choice of $m$ will completely determine $\eta$ and control the level at which the Gaussians are truncated. Then,
\begin{align*}
\eta=\left(\frac{2w\xi}{m}\right)^2.
\end{align*}
In section \ref{section:error_se:approximation_error} we will explain how the choice of $m$ is tied to $P$ in order to find the optimal $\eta$. As a consequence, $P$ remains the only parameter to select.

Truncating the Gaussians decreases the computational complexity of the spreading and gathering steps significantly, but the computations in these steps are still expensive as they involve evaluation of $P^3$ exponentials for each particle. Using the Fast Gaussian Gridding (FGG) approach suggested by Greengard $\&$ Lee \cite{greengardlee}, these computations are accelerated at the expense of using more memory space to store parts of the exponentials and reuse them. Here, we briefly describe the idea of the FGG and refer the reader to \cite{greengardlee} or \cite{se} for more details. Let $\v x_{\ni}$ be a particle position and $\v x=(i,j,k)h$, $(i,j,k)\in\mathbb{Z}^3$ a grid point. Then, the evaluation of 
\begin{align}
e^{-c(\v x-\v x_{\ni})^2}=e^{-c(ih-x_{\ni})^2}e^{-c(jh-y_{\ni})^2}e^{-c(kh-z_{\ni})^2},
\label{eq:fgg}
\end{align}
involves $M^3N$ exponential evaluations. Using the FGG approach we can rewrite \eqref{eq:fgg} e.g.\ in the $x$-direction as,
\begin{align}
e^{-c(ih-x_{\ni})^2}=\underbrace{e^{-c(ih)^2}}_a\underbrace{(e^{2chx_{\ni}})^ie^{-cx_{\ni}^2}}_b.
\end{align}
Since the term $(a)$ is independent of particle positions, it can be precomputed and stored. This computation on the whole grid involves evaluation of $M^3$ exponentials and requires $M^3$ storage. The term $(b)$, on the other hand, can be precomputed once for each particle and requires evaluation of $4N$ exponentials, $3(2N+MN)$ multiplications and $3NM$ storage in three dimensions. Note that the first term in $(b)$ is computed by successive multiplications with itself. The FGG approach is much more effective as compared to the direct approach when the Gaussians are truncated with $P$ points in the support in each direction. Table \ref{tab:se:fgg} shows that using the FGG, the cost of evaluating \eqref{eq:se:grid} and \eqref{eq:se:potential_discrete} reduces from $\order{P^3N}$ to $\order{PN}$ with the expense of $\order{PN}$ multiplication and storage.
\begin{table}[htbp]
\begin{center}
\begin{tabular}{c|cc}
& \textbf{Direct approach} & \textbf{FGG approach} \\ 
\hline
\textbf{\textbf{exp($\cdot$)} evaluation} & $P^3N$ & $P^3+3PN+4N$\\ 
\textbf{multiplication} & - & $3(2N+PN)$ \\
\textbf{storage} & - & $P^3+3PN$ \\
\end{tabular} 
\end{center}
\caption{The cost of evaluating exponentials in \eqref{eq:se:grid} and \eqref{eq:se:potential_discrete}.}
\label{tab:se:fgg}
\end{table}

\subsection{Calculation of the electrostatic force}
\label{section:se:force}
Based on the conclusion made in section \ref{subsec:force_calc}, to compute the force in the SE method, we differentiate the spreading function. The charge spreading function in the SE method has the form of $e^{-2\xi^2|\v x-\v x_{\ni}|_{\ast}^2/\eta}$. By analytic differentiation of this function and using \eqref{eq:intro:force}, \eqref{eq:se:potential_integral} is then reformulated as,
\begin{align}
\v F(\v x_{\mi})= 2\pi \frac{4\xi^2}{\eta}\left(\frac{2\xi^2}{\pi\eta}\right)^{3/2} q_{\mi} \int_{\Omega}(\v x-\v x_{\mi})_{\ast}\wt{H}(\v x)e^{-2\xi^2|\v x-\v x_{\mi}|_{\ast}^2/\eta}\diff\v x.
\label{eq:se:force}
\end{align}
As in \eqref{eq:se:potential_discrete}, this integral can be approximated by the trapezoidal rule to spectral accuracy. We remark here that since the force and potential differ only in the term $\v x-\v x_{\mi}$, it is possible to compute these quantities simultaneously at the interpolation step.

\subsection{Computational complexity of the SE method} 
\label{section:se:complexity}
The computational cost of the SE method can be divided into three parts; the spreading and gathering cost, the scaling cost, and the FFT cost. Consider a system of $N$ particles located in a box of length $L$ and assume that the system particle density $\rho=N/L^3$ is constant as $N$ grows, i.e.\ $L^3\propto N$ or equivalently $L\propto N^{1/3}$. We assume again that the domain $[0,L)^3$ is discretized with a total of $M^3$ points and truncated Gaussians have $P^3$ points in their supports. Therefore, the computational complexity of the spreading and gathering steps are $\order{NP^3}$. The scaling step involves $M^3$ multiplications and, therefore, the cost of this step scales as $M^3$. To evaluate the full complexity of the algorithm, and tie $N$ to $M$, we need to account also for the computational complexity of the real space part of the Ewald sum. Let us assume that the cut-off radius $r_\c$ is fixed. As $N$ grows, the real space sum scales as $N$ with a somewhat large constant that depends on the average number of nearest neighbors. Let $\tau$ be the level of accuracy at which we truncate the real and Fourier space sums. Since $r_\c$ is fixed, the error estimate \eqref{eq:truncation:kolafa_real} suggests that $e^{-\xi^2r_\c^2}\approx\tau$. This implies that $\xi$ is fixed. Then, from the error estimate \eqref{eq:truncation:kolafa_fourier}, $e^{-\pi k_{\infty}^2/(\xi L)^2}\approx\tau$ and therefore, $M\propto k_{\infty}\propto L$, i.e.\ $M^3\propto N$. Moreover, if $h=L/M$ is fixed, there exists an integer $P$ such that the approximation error also satisfies the requirement on the level of accuracy. Hence, the cost of the Fourier space scaling and FFTs are of the order $N$ and $N\log N$ respectively, and the full algorithm has a complexity of $\order{N\log N}$.

\section{Errors in the SE method}
\label{section:error_se}
In addition to the truncation errors that stem from truncating the sums with infinite number of terms, there are also approximation errors introduced in the SE method. These errors occur due to truncation of the Gaussians and using quadrature rules to compute the integrals in \eqref{eq:se:potential_integral} and \eqref{eq:se:force}, for the potential and force, respectively. Here, we aim to present experimental upper bounds for the approximation errors in the evaluation of the potential, energy and force and provide numerical results to assess the sharpness of the error bounds.

\subsection{Approximation error in the potential  calculation}
\label{section:error_se:approximation_error}
The derivation of \eqref{eq:se:potential_integral} (or \eqref{eq:se:force}) involves no approximation, but we commit errors when we evaluate the integral with the trapezoidal rule and also truncate the Gaussians. The following theorem adapted from \cite{se} provides an upper bound for the approximation error in evaluating the potential and sets a foundation for obtaining corresponding bounds for the energy and force.

\begin{theorem} (Error estimate in the potential calculation using the SE method).
Given $\xi$, $h>0$ and an integer $P>0$, define $w=hP/2$ and $\eta=(2w\xi/m)^2$. The error committed in evaluating the Fourier space part of the potential \eqref{eq:se:potential} by truncating the Gaussians at $|\normalfont\v x-\v x_\mi|=w$ and applying the trapezoidal rule as in \eqref{eq:se:potential_discrete} can be estimated by
\begin{align}
|\varphi_\SE^\L-\varphi^\L|_{\rms}\leq|\varphi_\SE^\L-\varphi^\L|_{\infty}\leq C(e^{-\pi^2P^2/(2m^2)}+\emph{\erfc}(m/\sqrt{2})),
\label{eq:se:approx_error_bound_potential}
\end{align}
and $C$ is a constant independent of $m$ and $P$.
\label{theo:approx_error}
\end{theorem}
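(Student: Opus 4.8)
The plan is to decompose $\varphi_\SE^\L-\varphi^\L$ into two pieces that are controlled by the two terms on the right of \eqref{eq:se:approx_error_bound_potential}. Let $\varphi^\L_{\mathrm{full}}$ denote the output of the same steps of Algorithm~\ref{alg:fftbased} but with the Gaussians in steps 2 and 6 left untruncated (keeping the same $\eta$). Then $|\varphi_\SE^\L-\varphi^\L|_\infty\leq|\varphi_\SE^\L-\varphi^\L_{\mathrm{full}}|_\infty+|\varphi^\L_{\mathrm{full}}-\varphi^\L|_\infty$: the first term isolates the effect of truncating the Gaussians and will produce the $\erfc(m/\sqrt2)$ contribution, while the second is the aliasing/quadrature error of the exact integral representation \eqref{eq:se:potential_integral} and will produce the $e^{-\pi^2P^2/(2m^2)}$ contribution. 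The bound $|\cdot|_\rms\leq|\cdot|_\infty$ is then immediate, since $e_\rms^2=\tfrac{1}{N}\sum_{\ni}(\varphi_\SE^\L-\varphi^\L)^2\leq\|\varphi_\SE^\L-\varphi^\L\|_\infty^2$.

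For the truncation piece, observe that the truncated and the full Gaussian $(2\xi^2/\pi\eta)^{3/2}e^{-2\xi^2|\v x-\v x_{\ni}|_\ast^2/\eta}$ agree on $|\v x-\v x_{\ni}|_\ast\leq w$, and that the choice $\eta=(2w\xi/m)^2$ means exactly that this Gaussian is cut at $m$ standard deviations, so outside the ball both are at most $(2\xi^2/\pi\eta)^{3/2}e^{-m^2/2}$. Hence the error they introduce in the spread function $H$, and therefore in $\wt H$ and in $\varphi^\L$, is bounded by the Gaussian tail mass $\int_{|\v y|_\ast>w}(2\xi^2/\pi\eta)^{3/2}e^{-2\xi^2|\v y|^2/\eta}\diff\v y$; the prefactor $(2\xi^2/\pi\eta)^{3/2}$ cancels all $w$- and $\eta$-dependence under the rescaling $\v y\mapsto\v y\sqrt\eta/(2\xi)$, leaving a dimensionless multiple of $\erfc(m/\sqrt2)$ (up to an algebraic prefactor in $m$ that is harmless under the scaling of section~\ref{section:error_se:approximation_error}). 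Multiplying by a uniform bound on $\wt H$ — a finite sum of smeared charges, hence $\leq C(Q,\xi,L)$ — gives $|\varphi_\SE^\L-\varphi^\L_{\mathrm{full}}|_\infty\leq C\,\erfc(m/\sqrt2)$ with $C$ independent of $m$ and $P$.

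For the quadrature piece, I would use the aliasing identity for the trapezoidal rule: on the torus $[0,L)^3$ sampled with $M$ points per direction, the rule reproduces every Fourier mode whose wave vector lies in $\tfrac{2\pi M}{L}\mathbb Z^3$ and drops the others, so the error equals $L^3$ times the sum of the Fourier coefficients of the integrand over $\tfrac{2\pi M}{L}\mathbb Z^3\setminus\{\v 0\}$. The integrand is the product of the Gaussian $e^{-2\xi^2|\v x-\v x_{\mi}|_\ast^2/\eta}$, whose Fourier coefficients decay as $e^{-\eta k^2/(8\xi^2)}$, with the interpolant $\wt H$, whose coefficients carry the factor $e^{-(1-\eta)k^2/(4\xi^2)}/k^2$ inherited from the scaling step \eqref{eq:se:scaling} and are therefore concentrated at low frequencies; the convolution theorem then bounds each aliased coefficient by a constant depending only on the total weight of $\wt H$ (again $C(Q,\xi,L)$) times $e^{-\eta k^2(1-\order{\eta})/(8\xi^2)}$. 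The shortest nonzero aliasing vector has $|\v k|=2\pi M/L=2\pi/h$, and inserting $\eta=(2w\xi/m)^2=(hP\xi/m)^2$ turns $e^{-\eta(2\pi/h)^2/(8\xi^2)}$ into $e^{-\pi^2P^2/(2m^2)}$; the longer aliasing vectors contribute a geometrically convergent remainder dominated by this leading term, and the $\order{\eta}$ correction in the exponent is absorbed into $C$ under the $m\propto P$ scaling used in section~\ref{section:error_se:approximation_error} to fix the optimal $\eta$. The same argument applied to the discrete Fourier transform of $H$ in step 3 of Algorithm~\ref{alg:fftbased} — where the scaling factor $e^{-(1-\eta)k^2/(4\xi^2)}/k^2$ again suppresses the dangerous near-Nyquist aliased modes — yields a contribution of the same order, so $|\varphi^\L_{\mathrm{full}}-\varphi^\L|_\infty\leq C\,e^{-\pi^2P^2/(2m^2)}$.

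Combining the two estimates and taking $C$ to be the larger constant gives \eqref{eq:se:approx_error_bound_potential}. The delicate point is the quadrature piece: because the integrand is a product of a Gaussian and the band-limited interpolant $\wt H$ rather than a single smooth periodic function, one must track how the low-frequency content of $\wt H$ couples to the aliasing lattice of the Gaussian and verify that the resulting constant is genuinely independent of $P$ and $m$ — it is precisely the suppression factor $e^{-(1-\eta)k^2/(4\xi^2)}/k^2$ coming from the scaling step that makes this work by killing the modes near the Nyquist frequency where the bare aliasing error would otherwise be largest. The truncation piece, by contrast, is routine once one recognises that cutting at $w$ is cutting at $m$ standard deviations. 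A fully rigorous version of this argument, and the numerical assessment of its sharpness, follows \cite{se}.
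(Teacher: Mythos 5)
Your decomposition into a Gaussian-truncation error (the tail of a Gaussian cut at $m$ standard deviations, giving $\erfc(m/\sqrt{2})$) and a trapezoidal-rule aliasing error (Poisson summation at the reciprocal-lattice vector $2\pi M/L$, giving $e^{-\eta(2\pi M/L)^2/(8\xi^2)}=e^{-\pi^2P^2/(2m^2)}$) is precisely the argument of the cited reference \cite{se}, to which the paper defers its entire proof. You have also correctly flagged the one genuinely delicate point --- that the scaling factor must suppress the near-Nyquist modes of $\wt{H}$ so that the effective aliasing distance is $2\pi M/L$ rather than $\pi M/L$ --- so the proposal is correct and essentially the same as the paper's (cited) proof.
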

\begin{proof}
See \cite{se}.
\end{proof}
Clearly, the error estimate is related to $m$, the shape controller, and $P$. It is natural to balance two terms such that they contribute equally in the error. This gives, $m=c\sqrt{\pi P}$ and we experimentally find $c=0.95$ to set a good choice and hence
\begin{align}
\eta=\left(\dfrac{2w\xi}{m}\right)^2=\dfrac{PL^2\xi^2}{c^2\pi M^2}.
\label{eq:eta}
\end{align}
Therefore, we can write 
\begin{align}
|\varphi_\SE^\L-\varphi^\L|_{\rms}\lessapprox A_\P e^{-\pi Pc^2/2}.
\end{align}
We emphasize here that, now, $P$ is the only parameter to control the approximation error and in contrast to the other PME-like methods, the approximation error is independent of the grid size, $M$. To find $A_\P$, we choose the grid size large enough for the truncation error to be negligible in the Fourier space sum. Through numerical experiments, we find that $A_\P\approx \sqrt{Q\xi L}/L$. To confirm this, the error in the Fourier space part of the potential has been measured for 100 different cases and for each $P$ including all possible combinations of $L\in\lbrace1,5,10,20,40\rbrace$, $N\in\lbrace200,400,800,1200\rbrace$, and $\xi\in\lbrace5,15,25,30,35\rbrace/L$. In figure \ref{fig:se:approx_error_bound} (left), we plot absolute rms error in the potential calculation as scaled by $A_\P$. We observe that the error for 100 different cases and all $P$ collapse rather well under this scaling. It also turns out that if $P$ is more than 24, independent of the grid size, machine accuracy is attained, \cite{se}.

\subsection{Approximation error in the energy  calculation}
Following a similar approach as in Theorem \ref{theo:approx_error}, the approximation error in the energy can be written as,
\begin{align}
|E^\L_\SE-E^\L|_{\rms}\lessapprox A_\E e^{-\pi Pc^2/2},
\label{eq:se:approx_error_bound_energy}
\end{align}
and considering \eqref{eq:intro:energy_ewald}, the natural choice would be to choose $A_\E\approx\frac{Q\sqrt{\xi L}}{L}$.

\subsection{Approximation error in the force calculation}
\label{section:error_se:approximation_error_force}
It is straightforward to do the similar procedure as in \cite{se} to derive the approximation error in the force calculation. In \eqref{eq:intro:force_ewald} we have seen that differentiation of the potential yields an extra term $\frac{1}{2}q_{\mi}(\frac{4\xi^2}{\eta})(\v x_{\mi}-\v x_{\ni})$. The term $(\v x_{\mi}-\bx_{\ni})$ can be bounded by $w$, the half width of the Gaussians. Hence, 
\begin{align}
|\F^\L_\SE-\F^\L|_{\rms}\lessapprox \frac{1}{2}\sqrt{Q}\left(\frac{4\xi^2w}{\eta}\right)A_\P e^{-\pi Pc^2/2},
\label{eq:approx_error_bound_force_temp}
\end{align}
where $A_\P\approx \sqrt{Q\xi L}/L$. On the other hand, using the definition of $\eta$ we have
\begin{align*}
\frac{4\xi^2w}{\eta} = \frac{4\xi^2w}{4\xi^2w^2/m^2}=\frac{m^2}{w}.
\end{align*}
To eliminate the truncation error, the term $e^{-(\pi k_{\infty}/\xi L)^2}$ in the error estimate \eqref{eq:truncation:kolafa_fourier} suggests that $M=2k_{\infty}\approx4\xi L$. Therefore, $w=Ph/2=PL/2M\approx P/8\xi$. Also, since $m^2\approx\pi P$, \eqref{eq:approx_error_bound_force_temp} can be simplified as
\begin{align}
|\F^\L_\SE-\F^\L|_{\rms}\lessapprox A_\F e^{-\pi Pc^2/2},\quad A_\F\approx 4\pi Q\sqrt{\frac{\xi^3}{L}}.
\label{eq:se:approx_error_bound_force}
\end{align}
Figure \ref{fig:se:approx_error_bound} (right) shows the experimental error bound and measured error in calculating the force as scaled with $A_\F$ using the SE method. The error is computed with the same 100 cases as in figure \ref{fig:se:approx_error_bound} (left) for the potential. For a specific error tolerance, the resulting $P$ value obtained from the error bound \eqref{eq:se:approx_error_bound_force} has to be rounded up to the next even integer to be able to utilize faster spreading/gathering routines available for even values of $P$. Hence, the fact that the approximation error bound is not as sharp as the bound for the potential, has no significant effect on the choice of $P$.
\begin{figure}[htbp]
  \tikzset{mark size=1}
  \begin{subfigure}[b]{0.49\textwidth}
    \centering 
    \includegraphics[width=\textwidth,height=.8\textwidth]{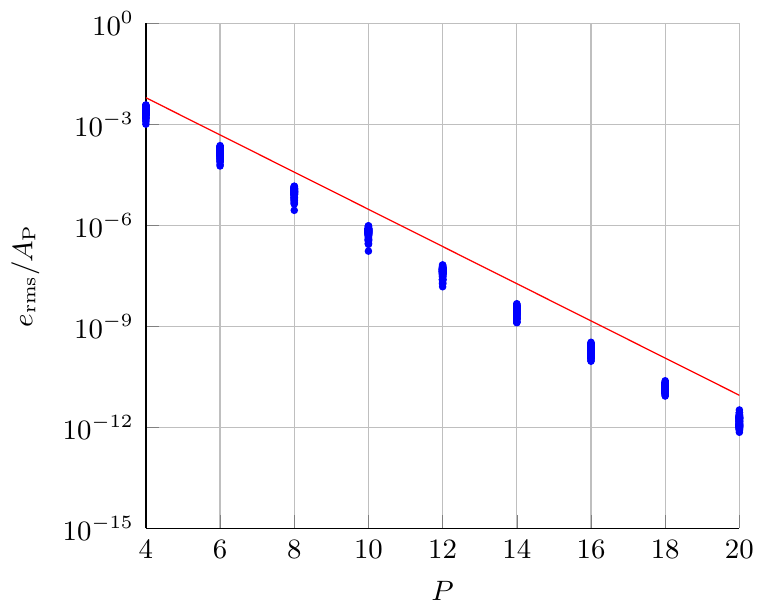}
  \end{subfigure}
\hfill
  \begin{subfigure}[b]{0.49\textwidth}
    \centering
    \includegraphics[width=\textwidth,height=.8\textwidth]{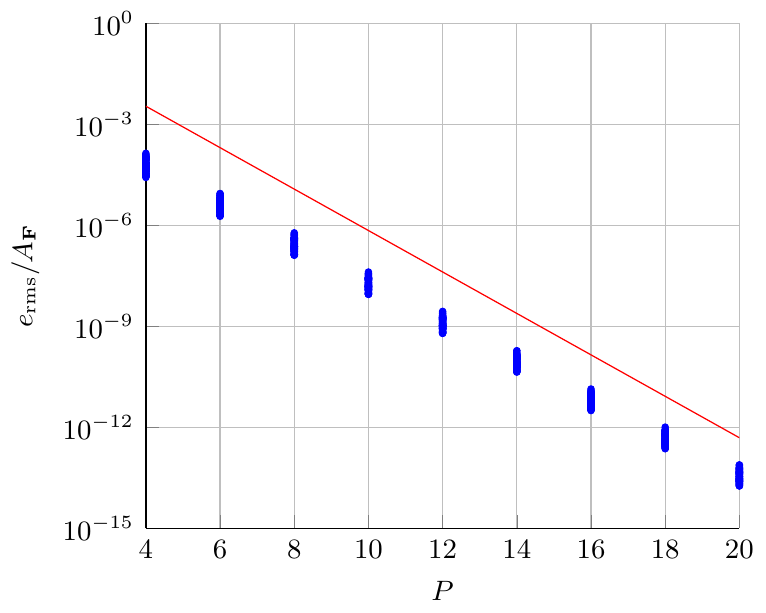}
  \end{subfigure}
  \caption {Scaled rms error in evaluating (left) the potential and (right) force. Scaled rms error confirms the dependency to $P$. The error is computed for each $P$ and for 100 different cases with $L\in\lbrace1,5,10,20,40\rbrace$, $N\in\lbrace200,400,800,1200\rbrace$, and $\xi\in\lbrace5,15,25,30,35\rbrace/L$. The scaled results satisfy the error bound $e^{-\pi Pc^2/2}$ (red line).}
  \label{fig:se:approx_error_bound}
\end{figure}

\begin{remark}
The error bounds \eqref{eq:se:approx_error_bound_potential} and \eqref{eq:se:approx_error_bound_force} suggest that the relative approximation error (obtained by scaling with $A_\P$ and $A_\mathrm{\bf F}$  respectively) has the form of $e^{-\pi Pc^2/2}$. Therefore with $P\approx10,20,24$, independent of the other parameters, the relative approximation error is less than $10^{-6}$, $10^{-12}$ and $10^{-15}$ respectively.
\end{remark}
\begin{remark}
The force defined throughout this article is computed based on \eqref{eq:se:force} but whenever we measure the absolute rms error in the SE and SPME methods in GROMACS, the force is scaled with the electrostatic conversion factor $1/4\pi\epsilon_0\approx138.935$.
\end{remark}

\subsection{Total error}
There are two types of errors involved in calculation of the electrostatic potential (energy or force) using the SE method, namely the truncation and approximation errors. The truncation errors are estimated by \eqref{eq:truncation:kolafa_real} and \eqref{eq:truncation:kolafa_fourier} and the approximation error is estimated by \eqref{eq:se:approx_error_bound_potential}, \eqref{eq:se:approx_error_bound_energy} and \eqref{eq:se:approx_error_bound_force}. 

To study the behavior of the truncation errors, we increase $P$ such that the approximation error is down to machine precision. We generate a uniformly distributed system of $N=800$ particles in a box of length $L=20$ and measure the absolute rms error as a function of the Ewald parameter for different grid sizes and cut-offs. Figure \ref{fig:se:total_error_potential} (left) shows how the total error in evaluating the potential for $\lbrace r_\c=4,M=32\rbrace$ and $\lbrace r_\c=5,M=64\rbrace$ (solid curves) follow the truncation error estimates \eqref{eq:truncation:kolafa_real} and \eqref{eq:truncation:kolafa_fourier} (dashed curves). Next, we investigate the effect of the approximation error in the total error. We choose a cloud-wall system (figure \ref{fig:se:systems}-middle) with $N=1200$ and $L=10$ and compute the total error for $\lbrace r_\c=5,M=64\rbrace$ and different $P$ values. Figure \ref{fig:se:total_error_potential} (right) shows how a smaller $P$ adds  approximation error to the total error. Again, the dashed curves represent the error levels obtained from theoretical estimates of truncation errors and the solid curves are the total error in evaluating the potential. We demonstrate in figure \ref{fig:se:total_error_force}, the total error in computing the force as a function of $\xi$ using the SE method. As we observe in both figure \ref{fig:se:total_error_potential} (left) and \ref{fig:se:total_error_force}, there is an excellent agreement between the actual and estimated errors.
\begin{figure}[htbp]
  \tikzset{mark size=1}
  \begin{subfigure}[b]{0.49\textwidth}
    \centering\includegraphics[width=\textwidth,height=.8\textwidth]{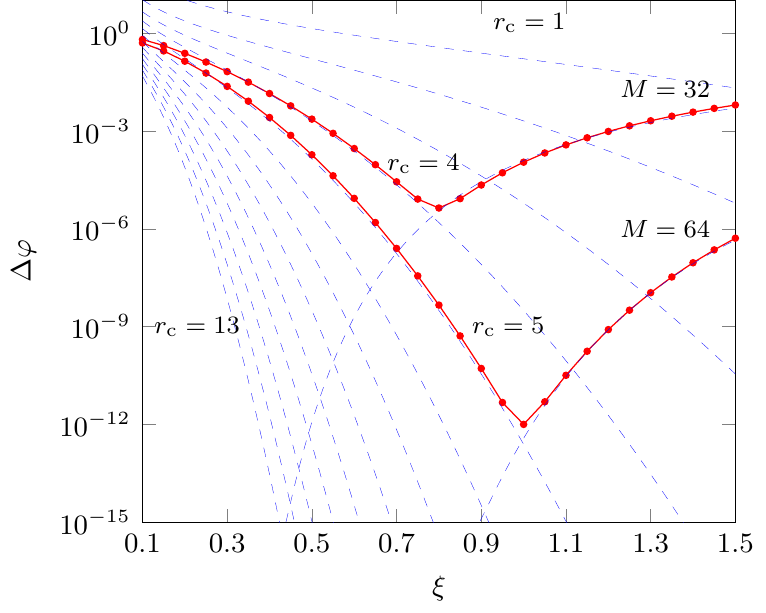}
  \end{subfigure}
  \hfill
  \begin{subfigure}[b]{0.49\textwidth}
    \centering\includegraphics[width=\textwidth,height=.8\textwidth]{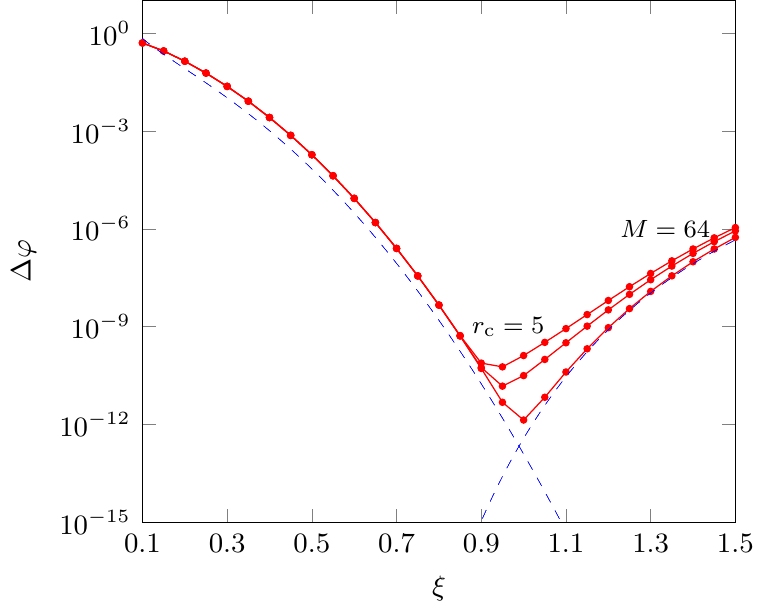}
  \end{subfigure}
  \caption{Measured and estimated absolute rms errors in computing the potential vs.\ $\xi$ using the SE method. The dashed curves show truncation error levels for different $r_\c$ and grid size $M$ based on the error estimates (\ref{eq:truncation:kolafa_real}a) and (\ref{eq:truncation:kolafa_fourier}a) and the solid curves represent the total measured error. A system of $N=800$ uniformly distributed particles in a box of length $L=20$ is used. (Left) $\lbrace r_\c=4,~M=32\rbrace$ and $\lbrace r_\c=5,~M=64\rbrace$. The parameter $P$ is chosen such that the approximation error is negligible. (Right) $r_\c=5$ and $M=64$. From top to bottom $P=18,20,24$.}
  \label{fig:se:total_error_potential}
\end{figure}
\begin{figure}[htbp]
  \tikzset{mark size=1}
  \centering
\includegraphics[width=0.45\textwidth,height=.4\textwidth]{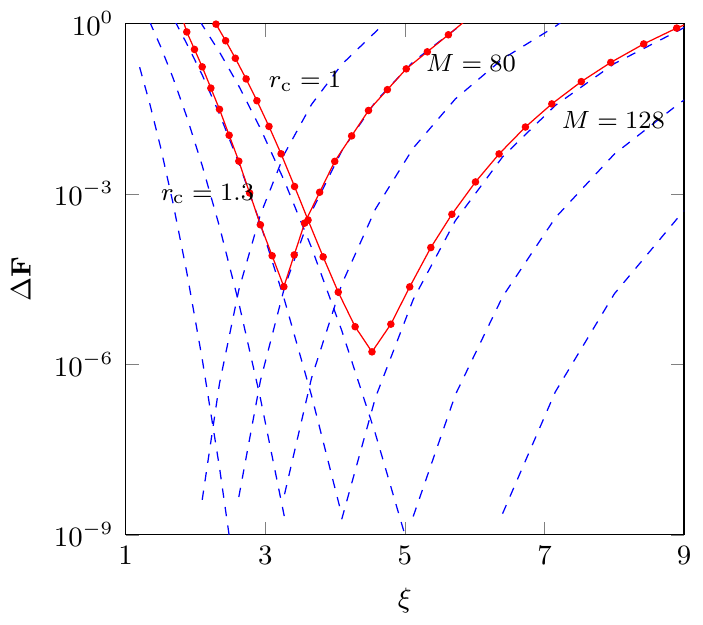}
\caption{Measured and estimated absolute rms errors in computing the force vs.\ $\xi$ using the SE method. The dashed curves show truncation error levels for different $r_\c$ and grid size $M$ based on the error estimates (\ref{eq:truncation:kolafa_real}c) and (\ref{eq:truncation:kolafa_fourier}c) and the solid curves represent the total measured error with $\lbrace r_\c=1,~M=128\rbrace$ and $\lbrace r_\c=1.3,~M=80\rbrace$. The parameter $P$ is chosen such that the approximation error is negligible. A cloud-wall system with $N=1200$ particles in a box of length $L=10$ is used (figure \ref{fig:se:systems}-middle).}
\label{fig:se:total_error_force}
\end{figure}

\subsection{Momentum conservation}
The conservation of momentum and energy are important criteria in the simulation of particles. Narrowing many events which affect the conservation to the methods that we use, some methods conserve momentum while others conserve energy. The conservation of energy seems more demanding than the conservation of momentum in molecular dynamics simulations. In classical Newtonian dynamics, the energy of an isolated system is conserved. Therefore, the molecular dynamics as a numerical solution of classical dynamics, has to conserve energy as well.

In Darden \etal version of the PME method \cite{pme}, the forces are interpolated symmetrically in space and the same function is used to distribute charges and to interpolate forces. Therefore, the PME method conserves momentum and not energy. On the contrary, the SPME method conserves energy and not momentum. Essmann \etal showed \cite{spme}, as an example, that the sum of electrostatic forces is not zero and that the simulation leads to a small random particle drift. This, however, can be removed by subtracting the drift from all particle before doing another time iteration \cite{spme}. The momentum conservation can be examined by computing the sum of forces in a charge neutral system. Due to the fact that in the SE method the force is obtained through analytic differentiation of the energy, the method conserves energy, however, it does not conserve momentum. To have an understanding of the magnitude of the sum of forces, we compute the electrostatic force for a pair of oppositely charged particles. The experiment shows that the net force is not zero but is in the same order as the rms error. This is in agreement with the observation in \cite{spme}.

\section{Errors in the SPME method}
\label{section:error_spme}
In this section, we briefly review the SPME method and give all the
formulas needed for its implementation, before we summarize the
formulas for the SE and SPME method side by side in Table \ref{tab:compare}. We
discuss the character of the approximation errors in the SPME method
in comparison to those of the SE method in order to understand the different
requirements on the FFT grid size in the two methods.

\subsection{Methodology}
\label{subsection:methodology}
The SPME method \citep{spme} is a reformulation of the PME method \citep{pme} which
itself is inspired by the P$^3$M method \citep{p3m}, and follows the structure of
the algorithm described in section \ref{section:fftbased}. The specific choice of window
functions is cardinal B-splines of different orders, as will be
described below. The electrostatic force is obtained by analytic
differentiation of the energy as generically described in section \ref{subsec:force_calc} 
and detailed for the SE method in section \ref{section:se:force}.

Suppose $N$ particles with charges $q_{\ni}$ at positions $\v x_{\ni}$, $\ni=1,2,\ldots,N$, are interacting with each other in a unit box and let $\v x=(x,y,z)=(a_1h,a_2h,a_3h)$, $a_i\in\lbrace0,1,2,\ldots\rbrace$ for $i=1,2,3$, with $h=1/M$ for a positive integer $M$. The cardinal B-spline of order $p=2$ is defined as 
\begin{align*}
\mathcal{M}_2(u)=
\left\{
\begin{array}{ll}
  1-|u-1|, & 0\leq u\leq 2, \\
  0, & \text{otherwise},
\end{array}
\right.
\end{align*}
and for $p>2$ is defined recursively as
\begin{align*}
\mathcal{M}_p(u) = \frac{u}{p-1}\mathcal{M}_{p-1}(u)+\frac{p-u}{p-1}\mathcal{M}_{p-1}(u-1).
\end{align*}
The charge distribution function $\sum_{\ni}q_{\ni}\delta(\v x-\v x_{\ni})$ is smeared by a convolution with an order $p$ cardinal B-spline on a uniform grid of size $M^3$ as
\begin{align}
Q(\v x)=\sum_{\ni=1}^Nq_{\ni}\bm{\mathcal{M}}_p(\v x_{\ni}-\v x)_{\ast} = \sum_{\ni=1}^Nq_{\ni}\mathcal{M}_p(x_{\ni}-x)_{\ast}\mathcal{M}_p(y_{\ni}-y)_{\ast}\mathcal{M}_p(z_{\ni}-z)_{\ast}.
\label{eq:spme:grid}
\end{align}
Here, as before, $^{\ast}$ denotes that periodicity is applied in all direction. The gridded distribution function $Q(\v x)$ acts in the same way as $H(\v x)$, cf.\ \eqref{eq:se:grid}, introduced in the SE method. The discrete Fourier transform of $Q$, denoted by $\wh{Q}$, can then be computed via a 3D FFT. The scaled gridded distribution function now is given by a multiplication in Fourier space,
\begin{align}
\wh{\wt{Q}}(\v k) = B(\v k)\frac{e^{-k^2/4\xi^2}}{k^2}\wh{Q}(\v k),
\label{eq:spme:scale}
\end{align}
where
\begin{align*}
B(k_1,k_2,k_3)=|b_1(k_1)|^2|b_2(k_2)|^2|b_2(k_3)|^2,\quad b_j(k_j)=\frac{e^{(p-1)k_jh}}{\sum_{\ell=0}^{p-2}\mathcal{M}_p((\ell+1)h)e^{\ii k_j\ell h}}.
\end{align*}
An inverse 3D FFT transforms $\wh{\wt{Q}}$ to the real space (denoted by $\wt{Q}$) and the result is then interpolated using another cardinal B-spline to recover the potential at target points, i.e.
\begin{align}
\varphi^{\L}_\SPME(\v x_\mi) = \dfrac{4\pi}{L^3}\sum_\ni \wt{Q}(\v x_\ni)\bm{\mathcal{M}}_p(\v x_{\ni}-\v x_\mi)_{\ast}.
\label{eq:spme:potential}
\end{align}
As mentioned previously, the force field is derived with analytic differentiation when needed \cite{spme}. In table \ref{tab:compare}, we give the formulas for the SE and SPME methods, where the steps correspond to those introduced in Algorithm \ref{alg:fftbased}. 

\begin{table}[h!]
\begin{center}
\begin{tabular}{|c|c|c|c|c|}
\hline
\textbf{steps} & \textbf{SE} & \textbf{Eq} & \textbf{SPME} & \textbf{Eq}. \\ 
\hline 
1 & regular grid of size $M^3$ &  & regular grid of size $M^3$ & \\ 
2 & $\left(\frac{2\xi^2}{\pi\eta}\right)^{3/2}e^{-2\xi^2|\v x_{\ni}-\v x|_{\ast}^2/\eta}$ & \ref{eq:se:grid} & $\bm{\mathcal{M}}_p(\v x_{\ni}-\v x)_{\ast}$ & \ref{eq:spme:grid}\\
3 & Apply 3D FFT & & Apply 3D FFT & \\
4 & $k^{-2}e^{-(1-\eta)k^2/4\xi^2}$ & \ref{eq:se:scaling} & $B(\v k)k^{-2}e^{-k^2/4\xi^2}$ &  \ref{eq:spme:scale}\\
5 & Apply 3D IFFT & & Apply 3D IFFT & \\
6 & $\left(\frac{2\xi^2}{\pi\eta}\right)^{3/2}e^{-2\xi^2|\v x_{\ni}-\v x|_{\ast}^2/\eta}$ & \ref{eq:se:potential_discrete} &
$\bm{\mathcal{M}}_p(\v x_{\ni}-\v x)_{\ast}$ & \ref{eq:spme:potential}\\
\hline
\end{tabular} 
\end{center}
\caption{A stepwise comparison of the SE and SPME methods with the steps as
given in Algorithm 1 and corresponding equation numbers. The number of
points in the support of the truncated Gaussian ($P$, which sets $\eta$ from equation \eqref{eq:eta}) must be chosen for the SE
method and the degree of the B-splines ($p$) for the SPME method.}
\label{tab:compare}
\end{table}

\subsection{Approximation error}
\label{section:error_spme:approximation_error}
In the SPME method, as a result of using $p$th order B-splines in the spreading/gathering steps, an approximation error of order $h^p$, $h=L/M$ is added. Hence, to reduce this type of error, one needs to increase the grid size. Consequently, the cost of scaling and specially FFT steps increases. Essmann \etal \cite{spme} used B-splines of order $p=3$ for low accuracies ($<10^{-4}$), $p=5$ for moderate accuracies ($<10^{-6}$) and $p=7$ for high accuracies ($<10^{-8}$) but they did not present a systematic way of choosing B-spline orders since they had no estimate of the approximation error available. In 2012, Ballenegger \etal \cite{ballenegger} showed that the P$^3$M method can be converted into the SPME method with a modification in the influence function. Therefore, the error estimates of the P$^3$M method can be used for the SPME method as well. Later, Wang \etal \cite{wang} presented an error estimate for the approximation error, but their estimate was not simple enough to be used in practice.

The approximation error in the SE method is independent of the grid size and is only a function of the number of points in the support of suitably scaled and truncated Gaussians. Hence, if the grid is coarser, the Gaussians are made wider. As a result of this difference, a much smaller FFT grid size is used in the SE method compared to the SPME method. This is an important advantage of the SE method over the SPME method specially on parallel architectures. Parallel FFT algorithms require all-to-all communications between processes and the number of communications increases as the square number of processes. Therefore, there have been several attempts to replace FFTs by iterative solvers or to employ a Fast multipole method (FMM) \cite{gromacs_parallel}. We shall return to this discussion in section \ref{section:parallel}. On the other hand, the spreading/gathering steps are more expensive in the SE method compared to the SPME method, even though their costs are substantially reduced by employing the Fast Gaussian Gridding. In section \ref{section:se_spme_time_comparison} we will see how changes in parameters will influence the behavior of both methods.

To visualize the dependency of the SE and SPME methods to grid size, in figure \ref{fig:se:se_spme_gridsize} we plot the relative rms error in evaluation of the force as a function of total number of grid points in three directions, $M^3$, for both methods and with two different Ewald parameters. The figure shows the spectral accuracy in the computation of the force in the SE method and polynomial accuracy in the SPME method. Note that if a smaller value of $P$ were chosen for the SE method, the
error curve would follow the current curve down to a certain accuracy
level, and then levels out.

\begin{figure}[htbp]
  \tikzset{mark size=1.5}
  \begin{subfigure}[b]{0.49\textwidth}
    \centering
    \includegraphics[width=\textwidth,height=.8\textwidth]{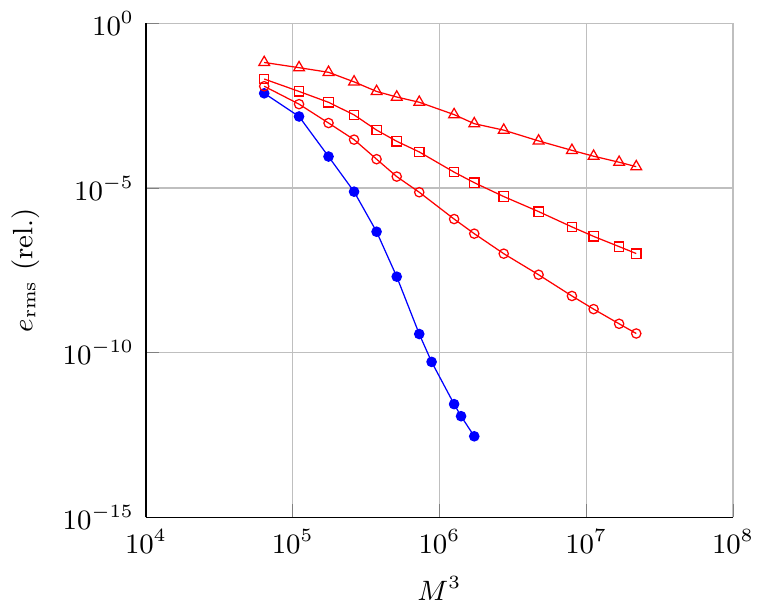}
  \end{subfigure}
  \hfill
  \begin{subfigure}[b]{0.48\textwidth}
    \centering
    \includegraphics[width=\textwidth,height=.8\textwidth]{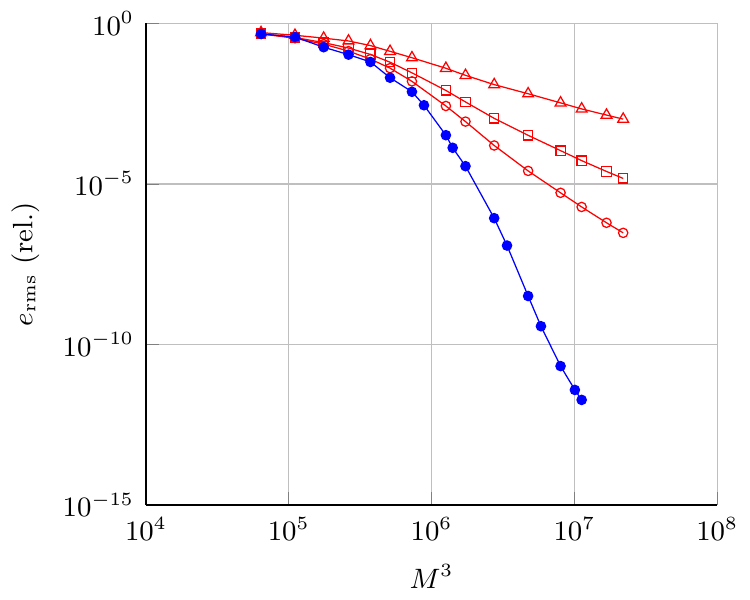}
  \end{subfigure}
  \caption{Relative rms error in evaluation of the force as a function of $M^3$ for the SE and SPME methods with (left) $\xi=3$ and (right) $\xi=6$. A uniform system of $N=81\,000$ particles in a box of length $L=9.3222$ is used. The red curves represent the SPME method with B-splines of order $p=3,5,7$ from top to bottom and the blue curve represents the SE method with $P=24$ points in the support of the Gaussians in each direction.}
\label{fig:se:se_spme_gridsize}
\end{figure}

\section{Runtime estimates}
\label{section:runtime_estimate}
So far, we have explained the fundamentals of the SE method and its error. In this section, we estimate the runtime of different parts of the
electrostatic calculations to later aid us in the choice of the
optimal Ewald parameter for our implementation of the SE method. For this, we implement the algorithm in GROMACS ver.\ 5.1 \cite{gromacs}. GROMACS is one of the fastest Molecular Dynamics simulation packages available which serves many users. It is efficient, highly optimized for serial use as well as parallel using MPI and/or multi-threading. Three different systems are examined; A uniformly distributed system with $N\in\lbrace1200, 3000, 81\,000, 1\,029\,000\rbrace$ particles, a cloud-wall system with $N\in\lbrace1200, 1\,012\,500\rbrace$ particles used in \cite{axel1} and an isolated dense-cloud system with $N=1200$ particles (see figure \ref{fig:se:systems}). The cloud-wall and isolated systems are generated artificially to allow for strong long range interactions.
\begin{figure}[htbp]
\centering
  \begin{subfigure}[b]{0.3\textwidth}
    \centering 
    \includegraphics[width=1\linewidth]{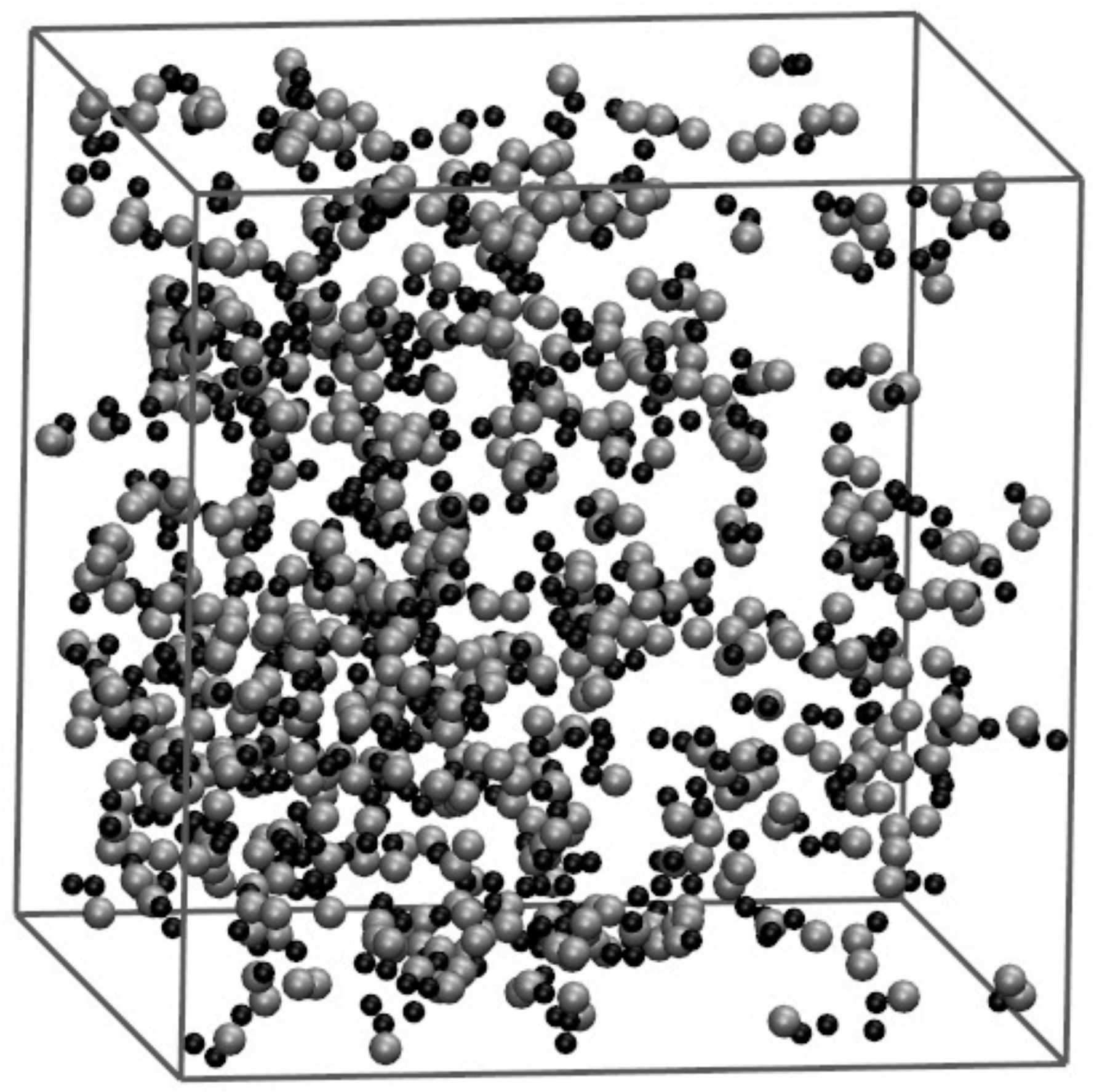}
  \end{subfigure}
  \begin{subfigure}[b]{0.3\textwidth}
    \centering 
    \includegraphics[width=1\linewidth]{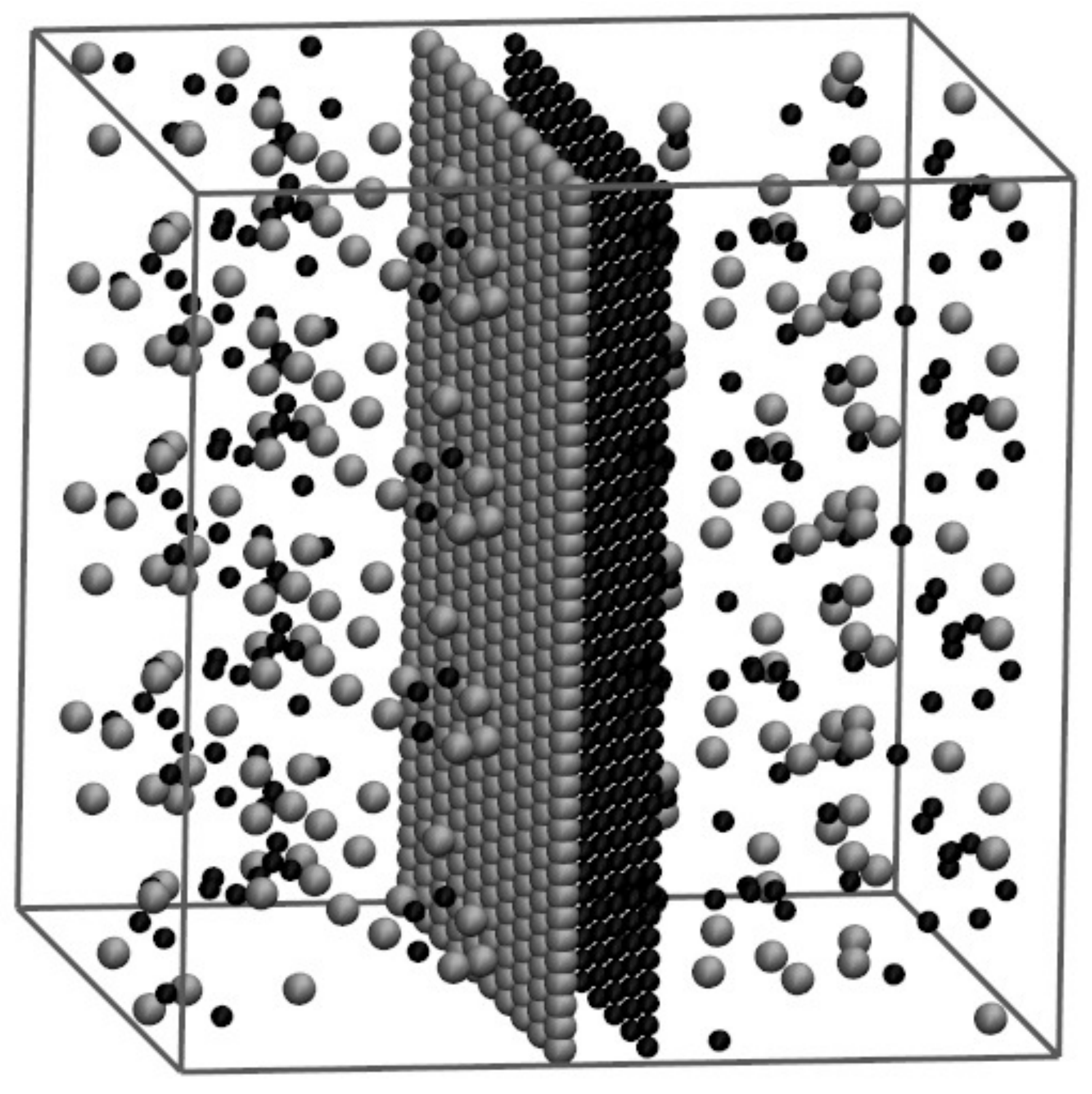}
  \end{subfigure}
  \begin{subfigure}[b]{0.3\textwidth}
    \centering 
    \includegraphics[width=1\linewidth]{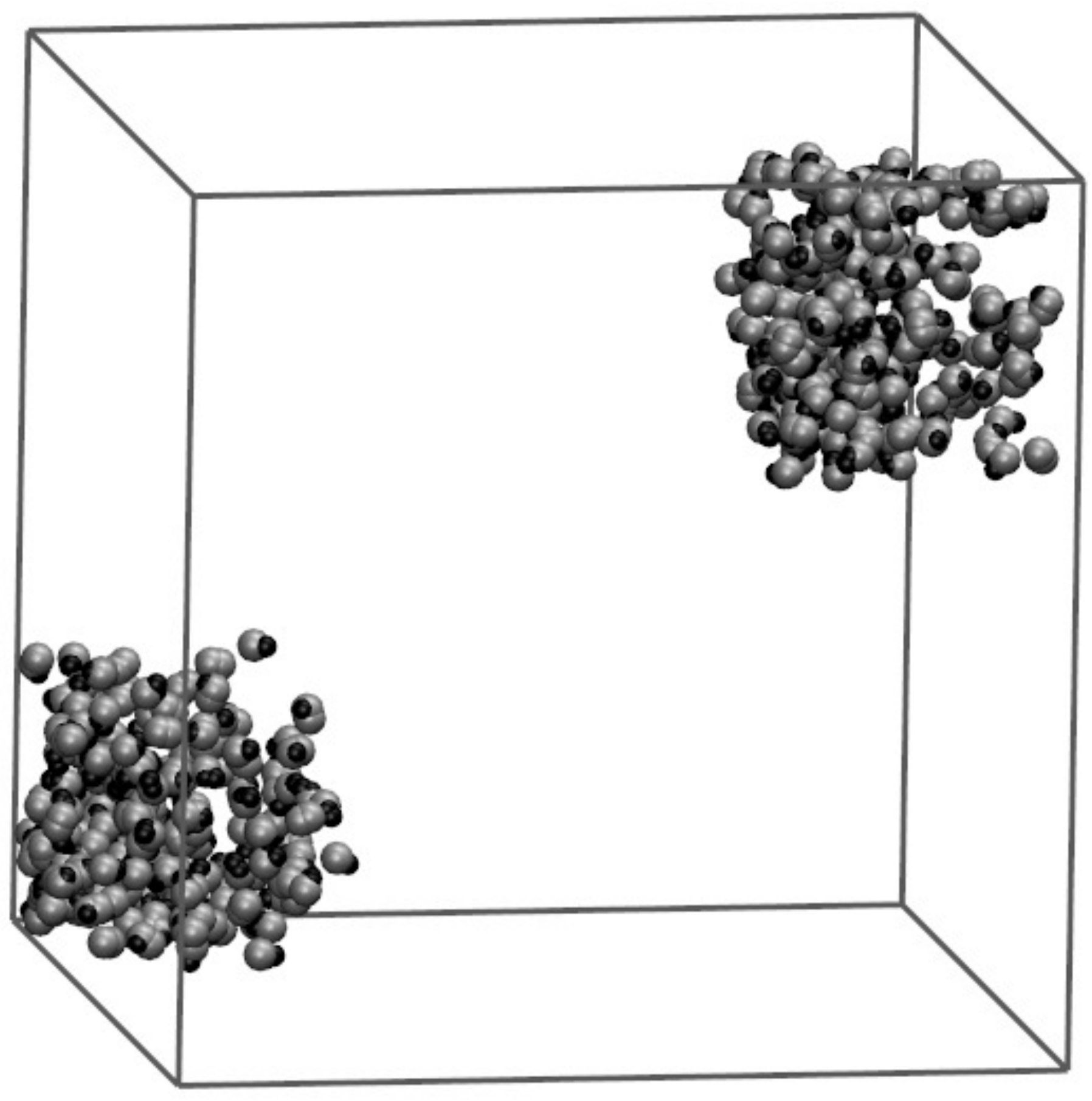}
  \end{subfigure}
  \caption{Three systems with $N=1200$ particles. (Left) A homogeneous system of particles. (Middle) a cloud-wall system of particle (borrowed from \cite{axel1}). (Right) an isolated dense-cloud system of particles.}
  \label{fig:se:systems}
\end{figure}

Finding the optimal Ewald parameter is always a concern in Ewald methods. The optimal parameter balances the runtime of the real and Fourier space parts of the sum. This optimal value depends not only on the algorithms that are used but also on the implementation of the methods. Therefore, what we present here might require a modification with a different implementation of the SE method and also with the use of other MD packages. Here, we strive to estimate the runtime of different parts of the Electrostatic
calculations. The results obtained here are useful to estimate the Ewald parameter while using the SE method. We start by generating a charge neutral system of $N=3000$ uniformly distributed particles placed in a box of length $L=3.1074$. To generate other uniformly distributed systems of particles, this system is scaled up  such that the charge density remains constant, i.e.\ $N/L^3\approx3000/30=100$. 

All runs (except those related to figure \ref{fig:beskow}) are performed on a desktop machine with 8 GB of memory and an Intel Core i7-3770 3.40 GHz CPU with four cores and eight threads. GROMACS was built with the Gnu C Compiler version 4.8.3. 

\subsection{Timing the real space sum}
Here we denote by $\n$, the average number of nearest neighbors within a cut-off radius $r_\c$ around each particle. As the points are uniformly distributed, the maximum and minimum number of nearest neighbors are approximately equal and therefore we have
$$\n=\frac{\frac{4}{3}\pi r_\c^3}{L^3}N.$$
The computation of the real space sum in GROMACS consists of two parts; neighbor searching and force calculation. The neighbor searching is the process of finding the neighboring particles which lie in a specified cut-off radius for each particle. In GROMACS, this step has a complexity of $\order{N}$ (though with a large prefactor of 125), cf.\ \cite{gromacs_manual}. The experiment (performed in GROMACS) suggests the following for the runtime in the neighbor searching step,
$$t_{\text{ns}}= c_{\text{ns}}\n N,$$
where $c_{\text{ns}}\approx 1.4\cdot10^{-8}$.
\begin{figure}[htbp]
  \tikzset{mark size=1.5}
  \begin{subfigure}[b]{0.49\textwidth}
    \centering 
    \includegraphics[width=1\linewidth]{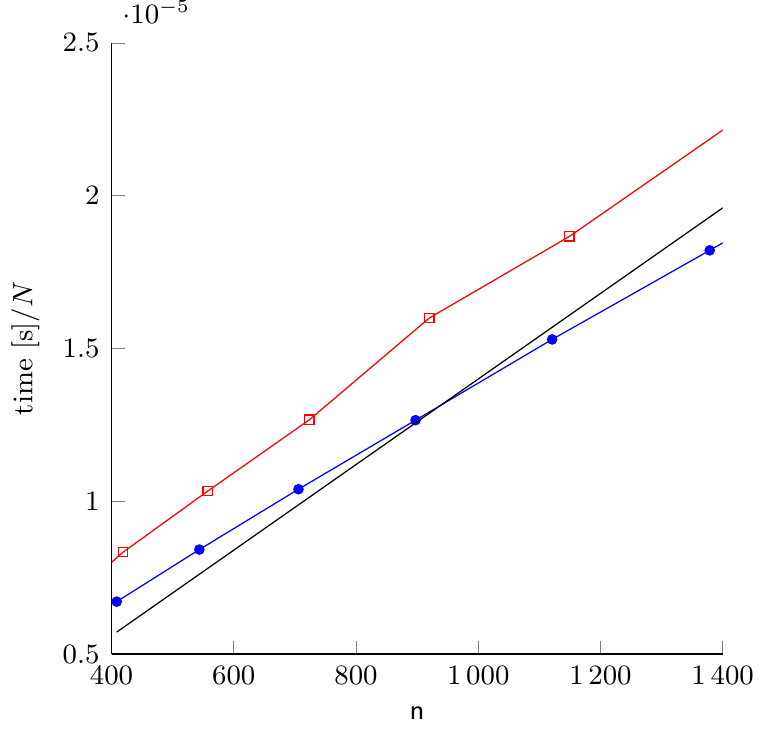}
  \end{subfigure}
  \hfill
  \begin{subfigure}[b]{0.49\textwidth}
    \centering 
    \includegraphics[width=.95\linewidth]{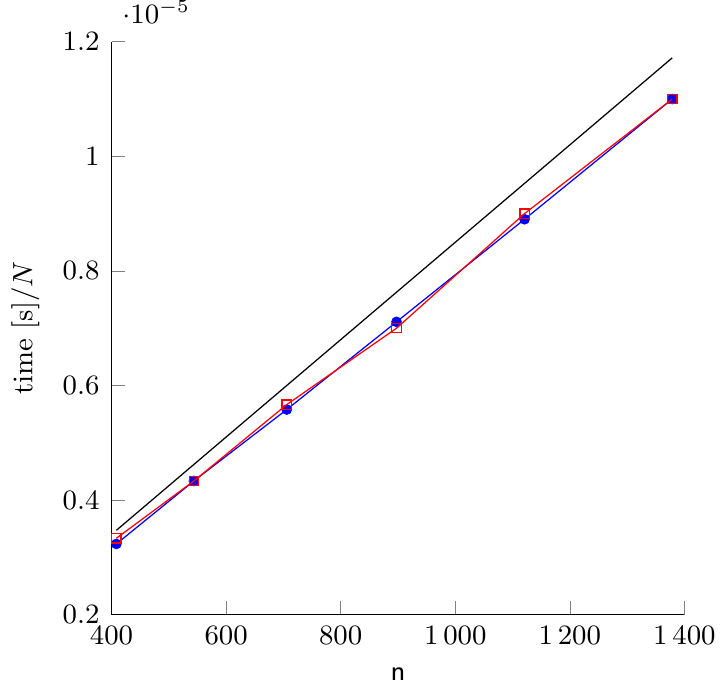}
  \end{subfigure}
  \caption{The experiment suggests ${\cal O}(N)= c\n N$ for (left) neighbor searching and (right) real space force calculation. The black line represents the estimated time. The red and blue dotted lines are the measured runtimes scaled with the number of particles $N=3000$ and $N=81\,000$ respectively.}
  \label{fig:ns_force_time}
\end{figure}

The calculation of the so called short-range or real space part of the Ewald sum has ${\cal O}(N)$ complexity as well, and the experiment suggests that the runtime follows
\begin{align*}
t_{\text{force}}= c_{\text{force}} \n N,
\end{align*}
where $c_{\text{force}}\approx 10^{-8}$. Figure {\ref{fig:ns_force_time} (left) shows the scaled runtime of the neighbor searching and figure {\ref{fig:ns_force_time} (right) shows the dependency of the scaled runtime of the real space sum calculation as a function of number of nearest neighbors. The computational time of the real space sum can then be estimated as
\begin{equation}
t_{\text{real}} = t_{\text{ns}}+t_{\text{force}}=c_{\text{real}}\n N,
\end{equation}
with $c_{\text{real}}\approx2.4\cdot10^{-8}$. Again, the constants $c_{\text{ns}}$, $c_{\text{force}}$ and $c_{\text{real}}$ are found by numerical experiments and depend on the implementation and the machine that is used.

We should also mention that the runtime of the real space sum presented here includes the runtime of other computations which do not actually belong to the real space sum but they cannot be excluded in GROMACS. Yet GROMACS internal parameters are chosen in a way that these calculations are small and fixed for all different runs.
\subsection{Timing the Fourier space sum}
\label{section:runtime_estimate_fourier}
The Fourier space computation consists of three different routines; FFT/IFFT, spreading/gathering and solving (scaling) routines. The computational costs of the FFT and IFFT routines are similar and are of the order $M^3\log M^3$. Since GROMACS performs a real to complex FFT and a complex to real IFFT, we can write 
\begin{align*}
t_{\text{fft-ifft}}=c_{\text{fft-ifft}}\frac{M^3}{2}\log(\frac{M^3}{2}),
\end{align*}
with $c_{\text{fft-ifft}}\approx4.5\cdot10^{-9}$.

Another step in the evaluation of the Fourier space sum using the SE method is the scaling step which has ${\cal O}(M^3)$ computational complexity. The runtime estimate in this step is in the form of
\begin{align*}
t_{\text{solve}}=c_{\text{solve}}M^3,
\end{align*}
and $c_{\text{solve}}\approx2\cdot10^{-9}$.

In the spreading (and gathering) step, the number of operations required to spread to (or interpolate from) the $P^3$ neighboring points of a source point are equal and scale as ${\cal O}(NP^3)$. For some $P$ values, it is possible to accelerate the routines with the aid of single instruction, multiple data (SIMD) instructions. Employing SIMD intrinsics will decrease the computational time for those $P$ values. Considering this timing complicates our analysis but for simplicity we suggest the following form
\begin{align*}
t_{\text{sp-ga}}=c_{\text{sp-ga}}NP^3,
\end{align*}
and $c_{\text{sp-ga}}\approx5\cdot10^{-9}$. Note that in the timing of the spreading and gathering steps, the runtime of the pre-computation step is also included. The pre-computation step includes the computation of the exponentials in the FGG routine. The total time spent in the Fourier space part of the sum can then be estimated as 
\begin{align}
 t_{\text{fourier}} &= t_{\text{fft-ifft}}+t_{\text{sp-ga}}+t_{\text{solve}}\nonumber \\
&=c_{\text{fft-ifft}}\frac{M^3}{2}\log(\frac{M^3}{2})+c_{\text{sp-ga}}NP^3+c_{\text{solve}}M^3,
\end{align}
where $c_{\text{fft-ifft}}\approx4.5\cdot10^{-9}$, $c_{\text{sp-ga}}\approx2\cdot10^{-9}$, and $c_{\text{solve}}\approx5\cdot10^{-9}$.

To assess reliability of the runtime estimates for the real and Fourier space sum presented above, a system of $N=81\,000$ uniformly distributed charged particles in a box of length $L=9.3222$ is considered. We measure the runtime in evaluating the force for different Ewald parameters and to achieve a relative rms error of $\approx10^{-5}$. Figure \ref{fig:se:total_time_81000_1e5} (left) shows that the estimates can predict actual runtimes rather well. It also suggests an optimal value of $\xi\approx6.75$ (corresponding to $r_\c=0.60$). 

We repeat the same experiment but now to achieve a relative rms error of $\approx10^{-12}$. Figure \ref{fig:se:total_time_81000_1e5} (right) suggests another  optimal value, $\xi\approx6.31$ (corresponding to $r_\c=0.9$). Comparing the plots in figure \ref{fig:se:total_time_81000_1e5} we observe that for the SE method, the Ewald parameter has a weak dependency to the error tolerance. This is again in contrast with the SPME method, see figure \ref{fig:se_spme_time_comparison:time_xi_se_spme_1e5} where the dependence is much stronger.
\begin{figure}[htbp]
  \tikzset{mark size=1.5}
  \begin{subfigure}[b]{0.49\textwidth}
    \centering \includegraphics[width=\textwidth,height=.8\textwidth]{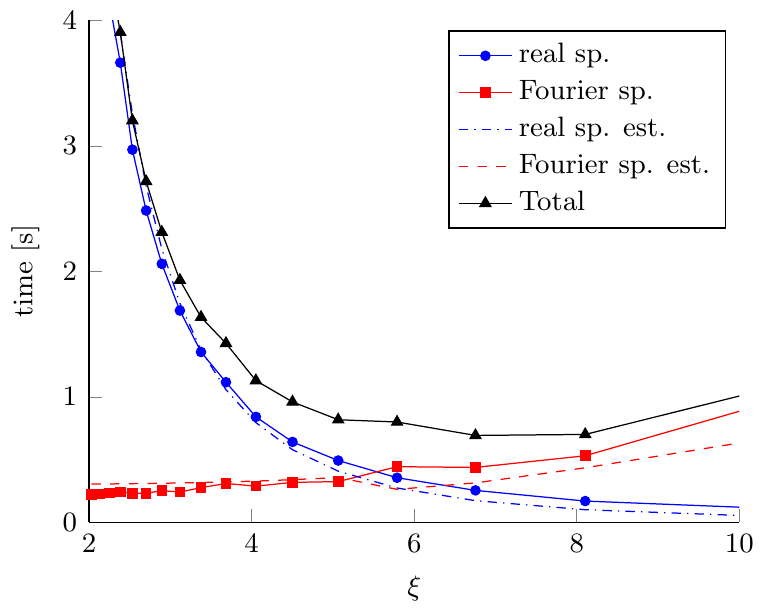}
  \end{subfigure}
  \hfill
  \begin{subfigure}[b]{0.49\textwidth}
    \centering 
    \includegraphics[width=\textwidth,height=.8\textwidth]{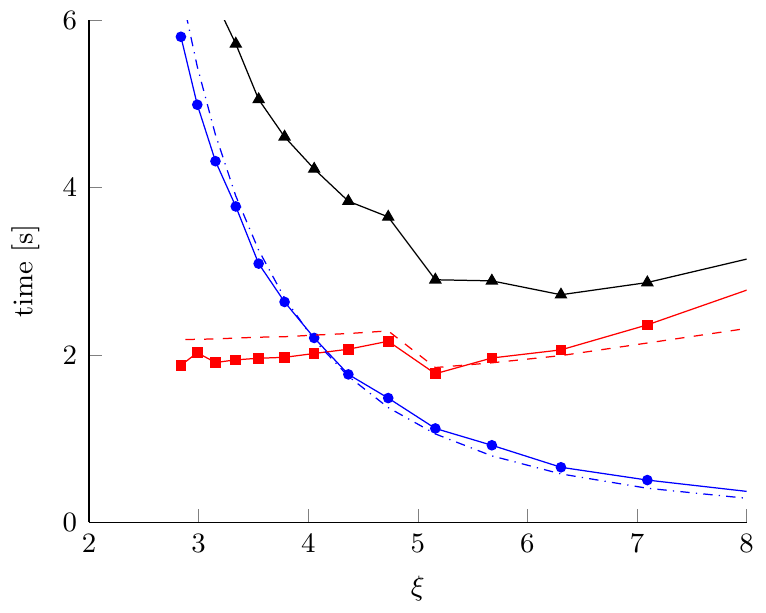}
  \end{subfigure}
  \caption{Real space, Fourier space, and total runtime as a function $\xi$ for a system of $N=81\,000$ uniformly distributed particles in a box of length $L=9.3222$. (Left) To achieve a relative rms error of $\approx10^{-5}$. $M=50,\ldots,250$ and $P\in\lbrace8,10\rbrace$. The optimal Ewald parameter is $\xi\approx6.75$ which corresponds to $r_\c=0.60$. (Right) To achieve a relative rms error of $\approx10^{-12}$, $M=100,\ldots,280$ and $P\in\lbrace20,22\rbrace$. The optimal Ewald parameter is $\xi\approx6.31$ which corresponds to $r_\c=0.90$.}
  \label{fig:se:total_time_81000_1e5}
\end{figure}

To convey the effect of system setting on the optimal Ewald parameter we find the optimal $\xi$ for three different systems in figure \ref{fig:se:systems} with the same number of particles and box size. We use systems with $N=1200$ oppositely charged particles and boxes of length $L=10$. Figure \ref{fig:se:three_systems_1e8} shows that, as expected, the optimal $\xi$ is only slightly different for each system. The density of the clouds in the isolated-cloud system is much higher than the density of other systems. Therefore, to keep the number of nearest neighbors fixed, the radius cut-off $r_\c$ is smaller for the isolated-cloud system and as a result, $\xi$ is larger. On the other hand, the cloud-wall system is designed in the way that the number of particles located on the walls are 4 times more than the number of particles in each cloud. Due to the 2D structure of the walls, $r_\c$ has to be increased to
have the same average number of near neighbors. This gives rise to a smaller Ewald parameter. Figure \ref{fig:se:three_systems_1e8} shows the optimal Ewald parameter obtained for three different cases.
\begin{figure}[htbp]
  \tikzset{mark size=1}
  \begin{subfigure}[b]{0.32\textwidth}
    \centering
    \includegraphics[width=\textwidth,height=.8\textwidth]{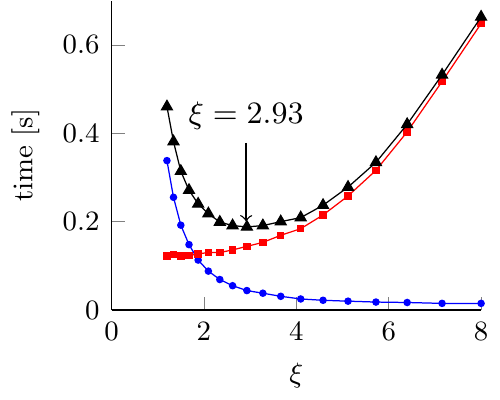}
  \end{subfigure}
  \begin{subfigure}[b]{0.32\textwidth}
    \centering
    \includegraphics[width=\textwidth,height=.8\textwidth]{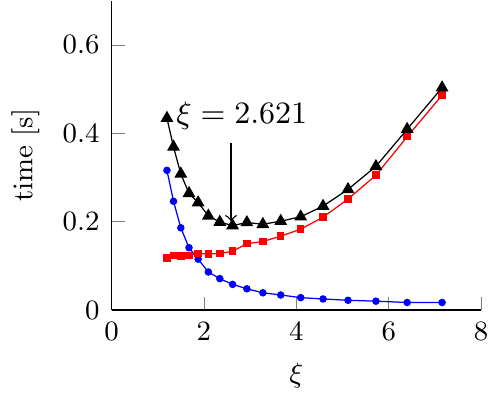}
  \end{subfigure}
  \begin{subfigure}[b]{0.32\textwidth}
    \centering
    \includegraphics[width=\textwidth,height=.8\textwidth]{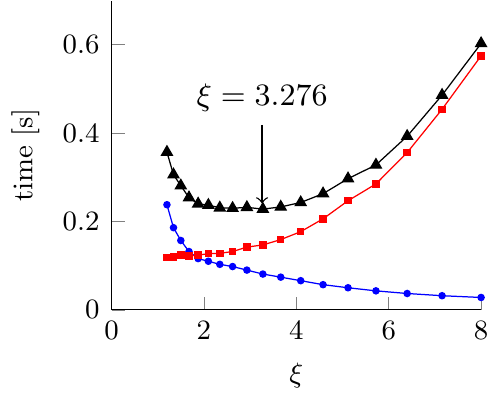}
\end{subfigure}
\caption{Real space ($\color{blue}\bullet$), Fourier space ($\color{red}\blacksquare$) and total runtime ($\color{black}\blacktriangle$) as a function of $\xi$ (left) for the uniform system, (middle) cloud-wall system and (right) two isolated-cloud system of particles to achieve a relative rms error of $\approx10^{-8}$. $N=1200$, $L=10$, $P=18$, $M=40,\ldots,220$.}
\label{fig:se:three_systems_1e8}
\end{figure}

\section{Comparison of the SE and SPME methods - serial implementation}
\label{section:serial}
There are many different FFT-based methods available to calculate the long-range electrostatics in periodic systems of particles. 
However, one of the most efficient methods available is still the SPME method. Moreover, one of the fastest and highly optimized implementations of the SPME method is available in GROMACS. Considering the man-hour time spend in optimizing the method, this is a difficult challenge for the SE method. In this section, we consider the serial implementation of the SE and SPME methods. Also, due to the fact that in our comparisons, the real space part of the sum is the same in the computation of the electrostatic interactions, from now on, we ignore the runtime of the real space part and only present results of the comparison in the Fourier space part of the Ewald sum.

In sections \ref{section:truncation} and \ref{section:error_se:approximation_error} the estimates are presented for the absolute errors though, in this and the following sections we measure relative rms errors. To obtain estimates for relative errors one can, however, use the magnitude of the potential and force provided in section \ref{section:error_se:approximation_error}.

\subsection{SE-SPME stepwise comparison}
We use a uniform system of $N=81000$ particles located in a box of length $L=9.3222$. This system is an equilibrated system of $1000$ water molecules in a box of length $31.074${ \AA} $=3.1074~nm$ with charges $-0.8476$ and $0.4238$ for Oxygen and Hydrogen atoms respectively. Figure \ref{fig:serial:se_spme3} shows the runtime in evaluating the force as a function of relative rms error with $P=16$ in the SE method and $p=3,5$ in the SPME method. The figure includes only the runtime of the FFT and \textit{solve} steps and not the spreading/gathering step. Also we present only the error committed in the Fourier space part of the sum.

\subsubsection*{FFT/IFFT} 
Since both methods are implemented in GROMACS, the same FFT routine is used for both methods. Figure \ref{fig:serial:se_spme3} (left) represents the total time spent in the FFT and IFFT routines. Since the SE method needs smaller mesh size compared to the SPME method, the runtime difference to achieve a certain level of accuracy is large. Note that the grid size is chosen to suit the FFT routines and to minimize runtime fluctuations.

\subsubsection*{Solve} 
In this context, \textit{solve} refers to the scaling of the Fourier transformed distributed charges on a uniform mesh. Again, the only difference in runtime of both methods in this step is due to the difference in the grid size. Referring to section \ref{section:runtime_estimate}, this step has a complexity of $\order{M^3}$ and, therefore, the scaling step is almost on the same order of computational complexity as the FFT/IFFT step.
\begin{figure}[htbp]
  \tikzset{mark size=1.5}
  \begin{subfigure}[b]{0.49\textwidth}
    \centering
    \includegraphics[width=\textwidth,height=.8\textwidth]{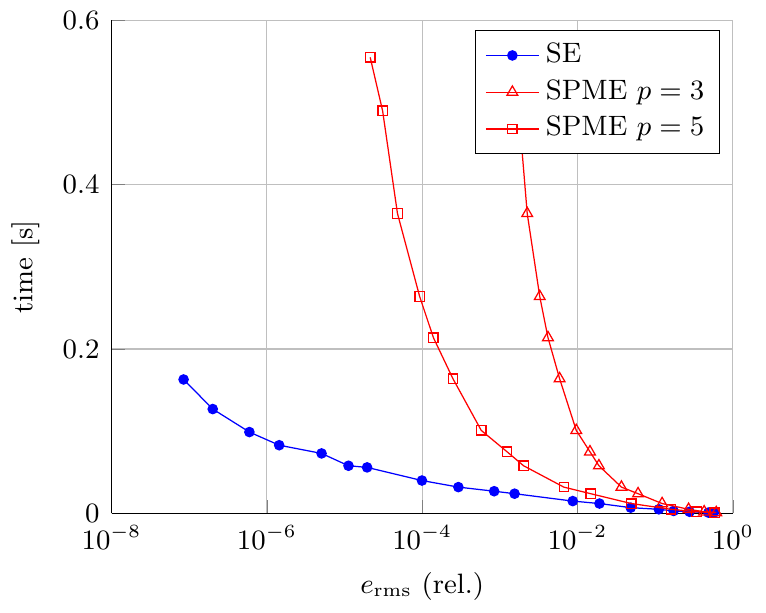}
  \end{subfigure}
\hfill
  \begin{subfigure}[b]{0.49\textwidth}
    \centering
    \includegraphics[width=\textwidth,height=.8\textwidth]{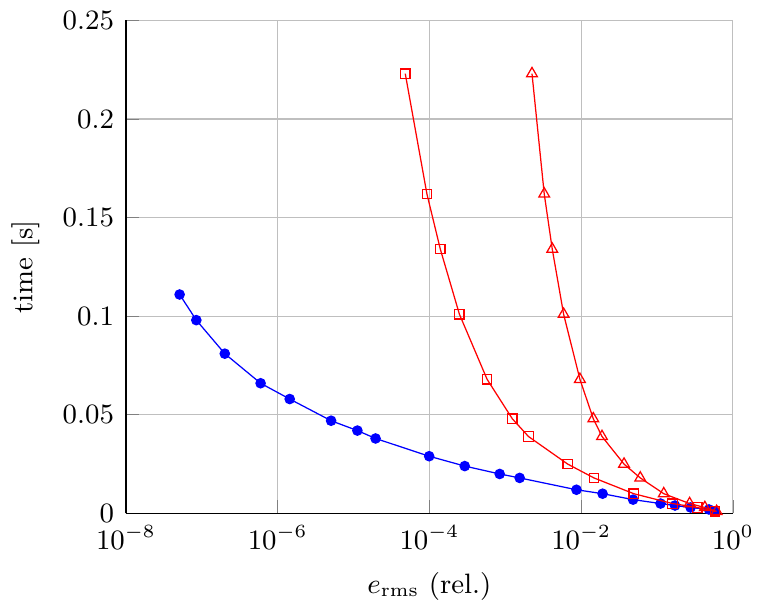}
  \end{subfigure}
  \caption{Runtime vs.\ relative rms error of (left) the FFT and (right) \textit{solve} routines. A uniform system with $N=81000$, $L=9.3222$ and $\xi=6.5$ is used. In the SE method, $P=16$ and $M=40,\ldots,192$ and in the SPME method $p=3$ and $p=5$ (from top to bottom) and $M=40,\ldots,350$. The red curves show the runtime of the SPME method and the blue curves represent the runtime of the SE method.}
  \label{fig:serial:se_spme3}
\end{figure}

\subsubsection*{Spreading and Gathering} 
Theoretically in the SPME method, it is possible to achieve any desired accuracy with all B-spline orders (even though it might not be practically feasible) but due to the low accuracy demand in MD, B-splines of order $3$ are frequently used. In the SE method, on the other hand, increasing the grid size while keeping $P$ fixed, reduces the error up to a certain level of accuracy. When the approximation error dominates truncation errors, to decrease the total error, $P$ has to be increased. 

The runtime for the SE method is dominated by the spreading and gathering steps. Hence,
when the total computational time is considered, the comparison of
the two methods is much more complex, as will be discussed next.

\subsection{SE-SPME runtime comparison}
\label{section:se_spme_time_comparison}
Our aim here is to examine sensitivity and behavior of both methods for different Ewald parameters. We use a system of $N=81\,000$ uniformly distributed particles in a box of length $L=9.3222$ and plot Fourier space runtime as a function of relative rms error, see figure \ref{fig:time_tol_se_spme_3_65}. For the SE method, $P$ is obtained using the approximation error bounds calculated in section \ref{section:error_se:approximation_error_force}. For the SPME method we use $p=3$ and 5. We observe that the runtime of the SE method is less sensitive to the Ewald parameter than the SPME method and therefore the optimal Ewald parameter can be found with less effort. Also, we note that for the case of $\xi=6.5$, error tolerance of $10^{-5}$ cannot be achieved with $p=3$ in a reasonable wall-clock time. 
\begin{figure}[htbp]
  \tikzset{mark size=1.5}
  \begin{subfigure}[b]{0.49\textwidth}
    \centering
    \includegraphics[width=\textwidth,height=.8\textwidth]{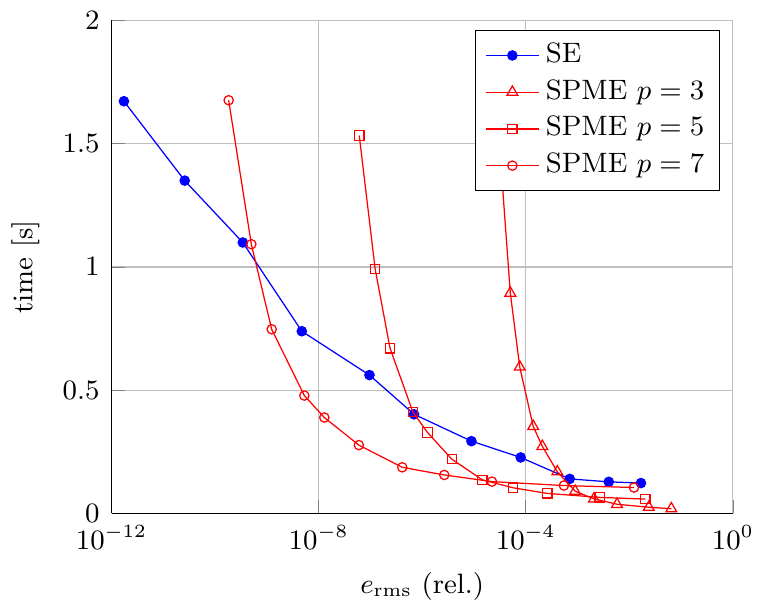}
  \end{subfigure}
  \hfill
  \begin{subfigure}[b]{0.49\textwidth}
    \centering
    \includegraphics[width=\textwidth,height=.8\textwidth]{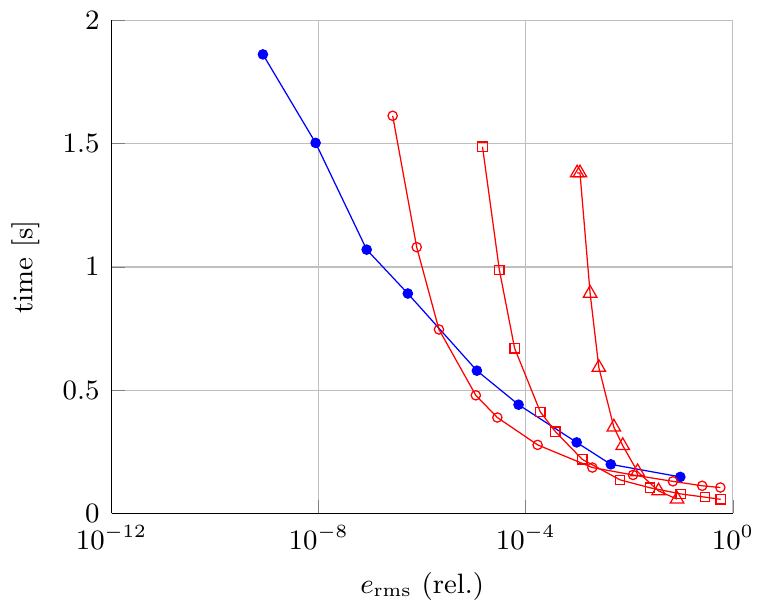}
  \end{subfigure}
  \caption{Fourier space runtime comparison of the SE and SPME methods using a single CPU core. (left) $\xi=3$, $P=8,\ldots,18$ and $M=40,\ldots,112$ in the SE method and $M=40,\ldots,400$ in the SPME method.(right) $\xi=6.5$, $P=8,\ldots,18$ and $M=80,\ldots,240$ in the SE method and $M=40,\ldots,400$ in the SPME method. A uniform system with $N=81\,000$ particles and $L=9.3222$ is used.}
  \label{fig:time_tol_se_spme_3_65}
\end{figure}



In the next experiment, in figure \ref{fig:se_spme_time_comparison:time_xi_se_spme_1e5} we compare Fourier space runtime as a function of $\xi$ for a relative rms error of $\approx10^{-5}$. For the SPME method, B-splines of order 3, 5 and 7 were used. For the SE method, as before, we obtain $P$ from the approximation error bound. The figure confirms that for the SPME method to be efficient, $\xi$ has to be kept small enough or $p$ has to be increased otherwise. For example, increasing $\xi$ from 3 to 6.5, the runtime of the SPME method with $p=7$ increases with a factor of 5 while this is less than a factor of 2 for the SE method. 
The cost of spreading/gathering and pre-computation steps depend only on $P$ (since $N$ is fixed). Also $P$ is uniquely determined by the error tolerance and, therefore, is constant for all $\xi$ in this test. Increasing $\xi$, however, increases the grid size and consequently the cost of the FFT and \textit{solve} steps. 
\begin{figure}[htbp]
  \tikzset{mark size=1.5}
  \centering
  \includegraphics[width=0.6\textwidth,height=.4\textwidth]{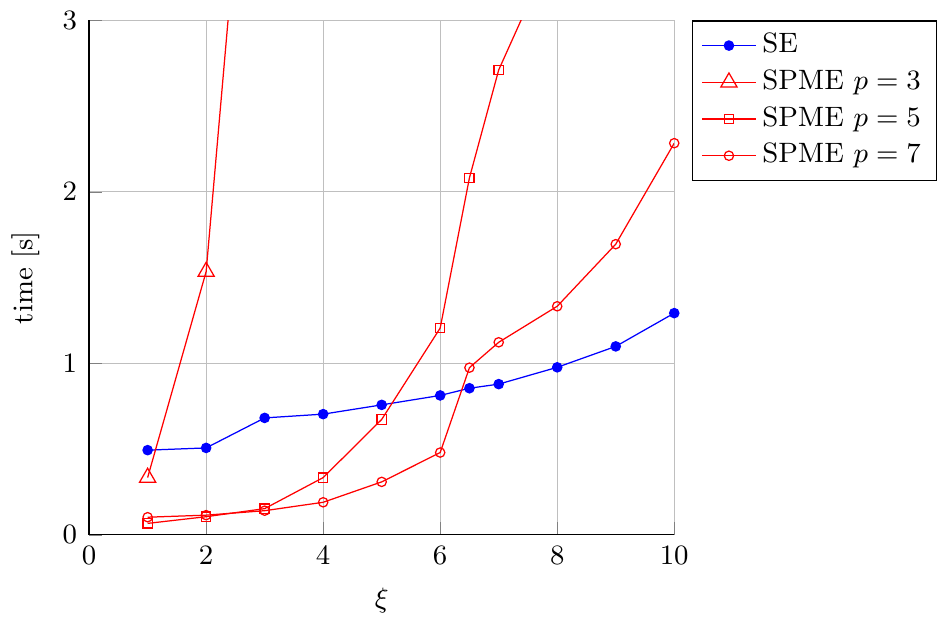}
  \caption{Fourier space runtime comparison of the SE and SPME methods using a single CPU core to achieve a relative rms error of $\approx10^{-5}$. The red lines are the runtime of the SPME method with  B-splines of orders $p=3,5,7$. A uniform system with $N=81000$ particles in a box of length  $L=9.3222$ is used.}
  \label{fig:se_spme_time_comparison:time_xi_se_spme_1e5}
\end{figure}

\section{Comparison of the SE and SPME methods - parallel implementation}
\label{section:parallel}
On parallel computers or while using heterogeneous architectures, a larger grid size in the 3D FFT algorithm will increase the cost of communications between processes. This cost can degrade the performance of the FFT-based methods and specially those with larger FFT grid requirement. Unlike FFTs, spreading/gathering and solve steps benefit more from parallel processing.  We use the GROMACS parallel paradigm \cite{gromacs_parallel} and implement a parallel version of the SE method. As has already been explained, two 3D FFTs are needed in the implementation of the SE and SPME methods. The parallel implementation of the 3D FFT in GROMACS requires also two redistributions (or four, considering also the 3D IFFT) of 3D grids that is clearly expensive. In figure \ref{fig:parallel:parallel_4_cores} we use a uniform system of $N=81\, 000$ particles with $L=9.3222$ and plot Fourier space runtime of the SE method with $P=8,\ldots,18$ and SPME method with $p=3,5,7$ for $\xi=3$ (left) and $\xi=6.5$ (right) using 4 CPU cores. Comparing figure \ref{fig:time_tol_se_spme_3_65} and \ref{fig:parallel:parallel_4_cores} discloses that for $\xi=6.5$, the intersection of the runtime curve for the SE and SPME ($p=7$) methods changes from an error level around $10^{-6}$ in the serial implementation to $10^{-5}$ in the parallel implementation with 4 CPU cores.

\begin{figure}[htbp]
  \tikzset{mark size=1.5}
  \begin{subfigure}[b]{0.49\textwidth}
    \centering
    \includegraphics[width=\textwidth,height=.8\textwidth]{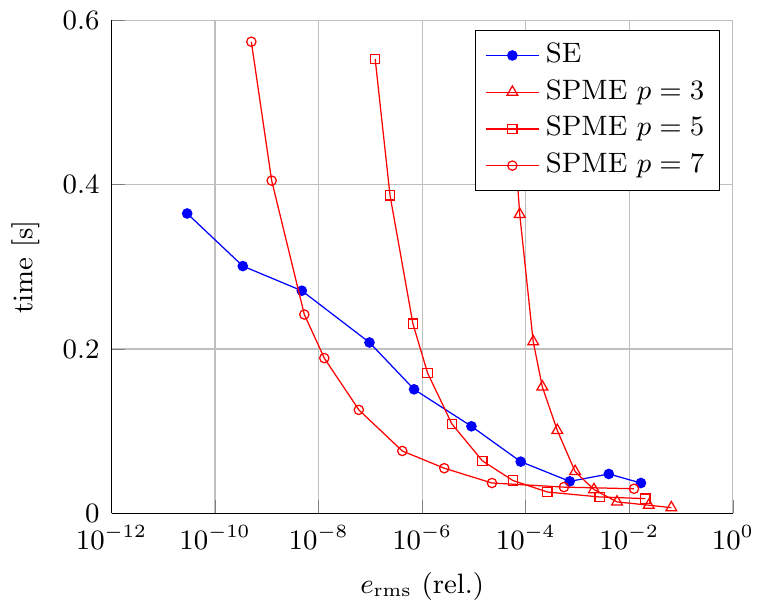}
\end{subfigure}
\hfill
  \begin{subfigure}[b]{0.49\textwidth}
    \centering
    \includegraphics[width=\textwidth,height=.8\textwidth]{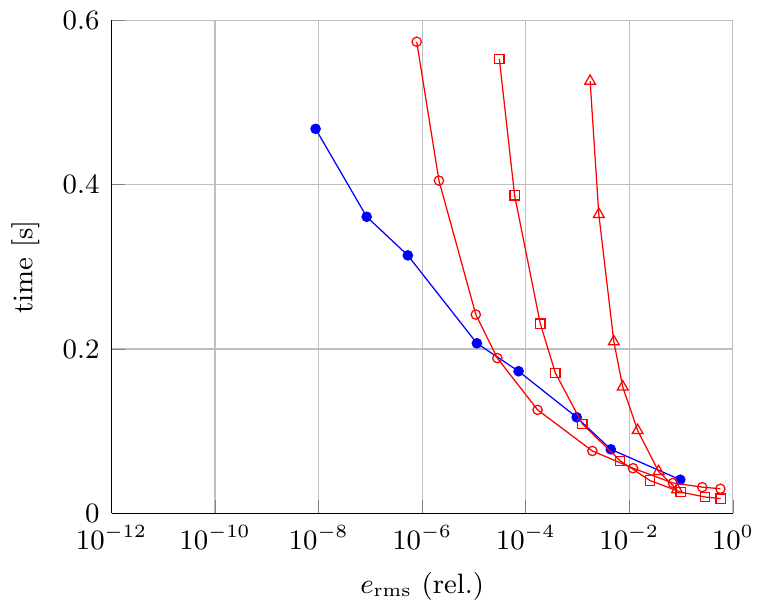}
\end{subfigure}

\caption{Fourier space runtime comparison of the SPME method of order $p=3,5,7$ and the SE method using 4 CPU cores. (left) $\xi=3$, $P=8,\ldots,18$ and $M=40,\ldots,112$ in the SE method and $M=40,\ldots,400$ in the SPME method. (right) $\xi=6.5$, $P=8,\ldots,18$ and $M=80,\ldots,240$ in the SE method and $M=40,\ldots,400$ in the SPME method. A uniform system with $N=81\,000$ particles and $L=9.3222$ is used.}
\label{fig:parallel:parallel_4_cores}
\end{figure}



Next, we generate a uniform system of $N=1\,029\,000$ particles within a box of size $L=22.2698$. This system is generated by replicating a uniform system of $N=3000$ particles in a box of length $L=9.3222$. As in the previous section and similar to the case with $N=81\,000$ uniformly distributed system of particles, we choose $\xi=6.5$. If we also consider the real space sum cost, runtime estimates in section \ref{section:runtime_estimate} can be used to obtain an almost optimal Ewald parameter, $\xi=4.184$. The result using 8 CPU threads is shown in figure \ref{fig:parallel:uniform_8_cores}. We observe that the SE method is still competitive for low error tolerances. In fact, the cross overs are decreased and occur at $\approx10^{-5}$ for $\xi=6.5$ and at $\approx10^{-6}$ for $\xi=4.184$ instead. In the right plot of figure \ref{fig:parallel:uniform_8_cores}, due to insufficient memory, the simulation can not be performed for $M>540$ (red curves) on our desktop machine.

\begin{figure}[htbp]
  \tikzset{mark size=1.5}
  \begin{subfigure}[b]{0.49\textwidth}
    \centering
    \includegraphics[width=\textwidth,height=.8\textwidth]{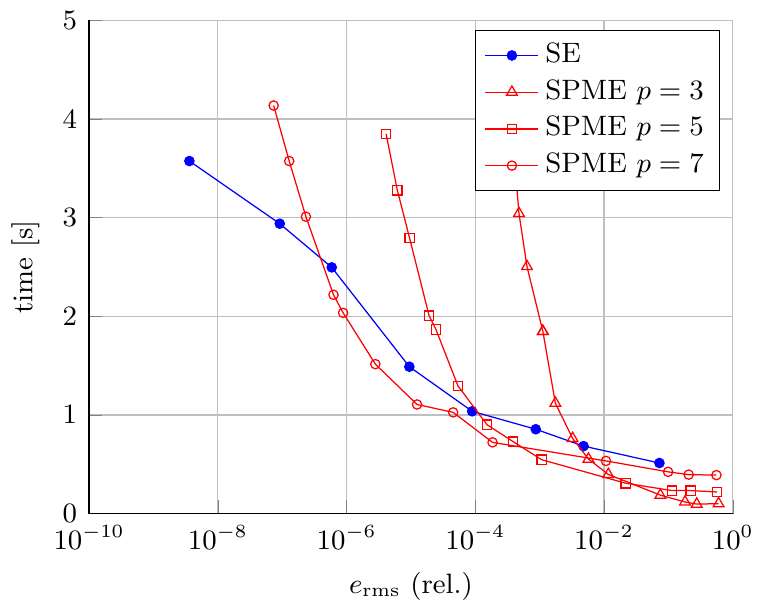}
\end{subfigure}
\hfill
  \begin{subfigure}[b]{0.49\textwidth}
    \centering
    \includegraphics[width=\textwidth,height=.8\textwidth]{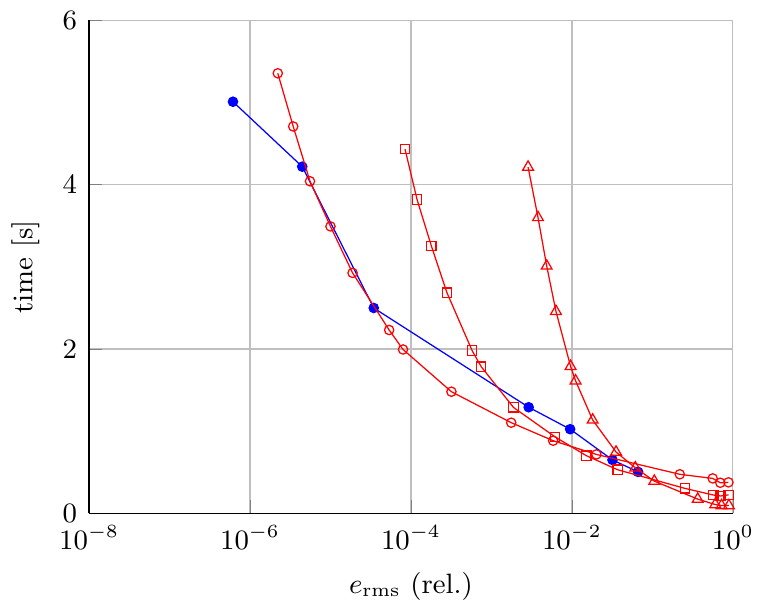}
\end{subfigure}
\caption{Fourier space runtime comparison of the SE and SPME methods using 8 CPU threads. A uniform system of $N=1\,029\,000$ particles in a box of length $L=22.2698$ is used. For the SE method $P=6,...18$ and $M=220,\ldots,360$, and for the SPME method,  $p=3,5,7$ and $M=50,\ldots,540$. (left) $\xi=4.184$, (right) $\xi=6.5$.}
\label{fig:parallel:uniform_8_cores}
\end{figure}



Figure \ref{fig:parallel:wall8_cores} shows a similar experiment using a cloud-wall system of $N=1\,012\,500$ particles in a box of length $L=150$. As in \cite{axel1}, we set $\xi=0.7002$ for the left and $\xi=0.8267$ for the right plot of figure \ref{fig:parallel:wall8_cores}. We should note that based on the Kolafa $\&$ Perram truncation error estimates described in section \ref{section:truncation}, Fourier space truncation error is controlled by $\xi L$ and not only $\xi$. Hence, the Ewald parameters are one order of magnitude smaller than the parameters in the previous simulations. Based on these plots we can conclude that, for systems with strong long-range interactions, the SE method can be competitive even for error tolerances below $10^{-5}$.
\begin{figure}[htbp]
  \tikzset{mark size=1.5}
  \begin{subfigure}[b]{0.49\textwidth}
    \centering
    \includegraphics[width=\textwidth,height=.8\textwidth]{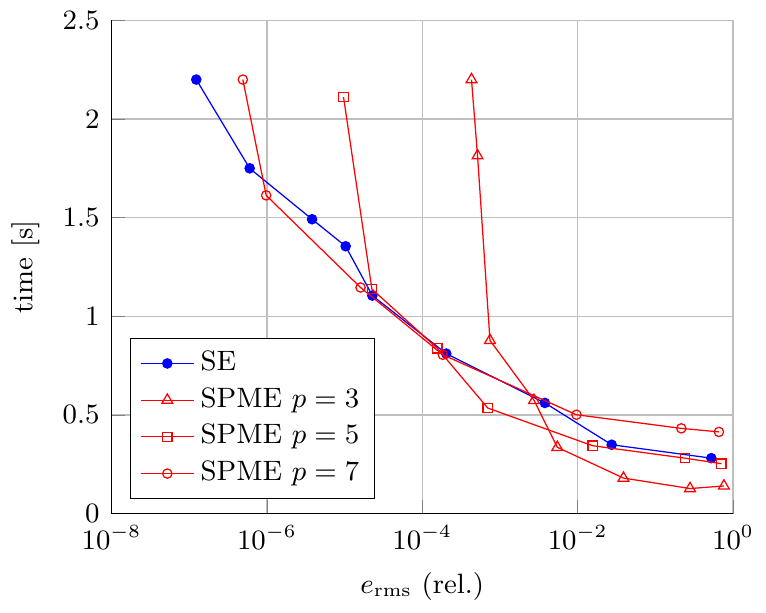}
\end{subfigure}
\hfill
  \begin{subfigure}[b]{0.49\textwidth}
    \centering
    \includegraphics[width=\textwidth,height=.8\textwidth]{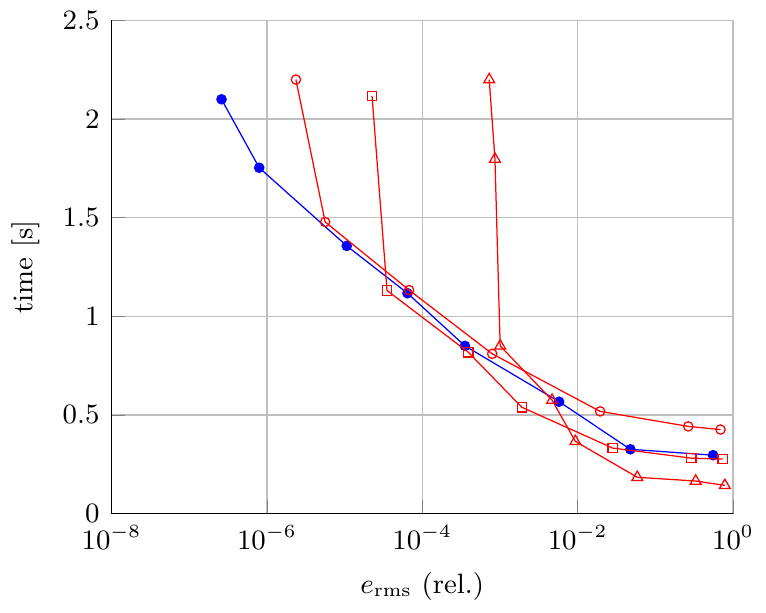}
\end{subfigure}
\caption{Fourier space runtime comparison of the SE and SPME methods using 8 CPU threads. A cloud-wall system with $N=1\,012\,500$ particles in a box of length $L=150$ is used. For the SE method $P=6,\ldots,16$ and for the SPME method  $p=3,5,7$. Also $M=32,\ldots,448$ for both methods. (left) $\xi=0.7002$, (right) $\xi=0.8267$.}
\label{fig:parallel:wall8_cores}
\end{figure}


\section{Further optimization}
\label{section:opt}
GROMACS can be accelerated with SIMD extensions, e.g. AVX $\&$ AVX2. In the SE method implementation, the spreading/gathering routines for $P=8,16$ and 24 can be accelerated using AVX and AVX2 instructions if the necessary conditions are met. Our desktop processor supports AVX instructions, but for a benchmark with AVX2, we will perform an experiment on Beskow supercomputer.

We start by a cloud-wall system of $N=1\,012\,500$ particles with box size $L=150$ and set $\xi=0.8267$. First, we choose $P=8,16$ and plot Fourier space runtime for the SE method as a function of total grid size $M^3$. Figure \ref{fig:avx:avx_sse_comparison} (left) shows that using AVX instructions, the total runtime is smaller and this reduction is more pronounced when $P=16$.
\begin{figure}[htbp]
  \tikzset{mark size=1.5}
  \begin{subfigure}[b]{0.49\textwidth}
    \centering
    \includegraphics[width=\textwidth,height=.8\textwidth]{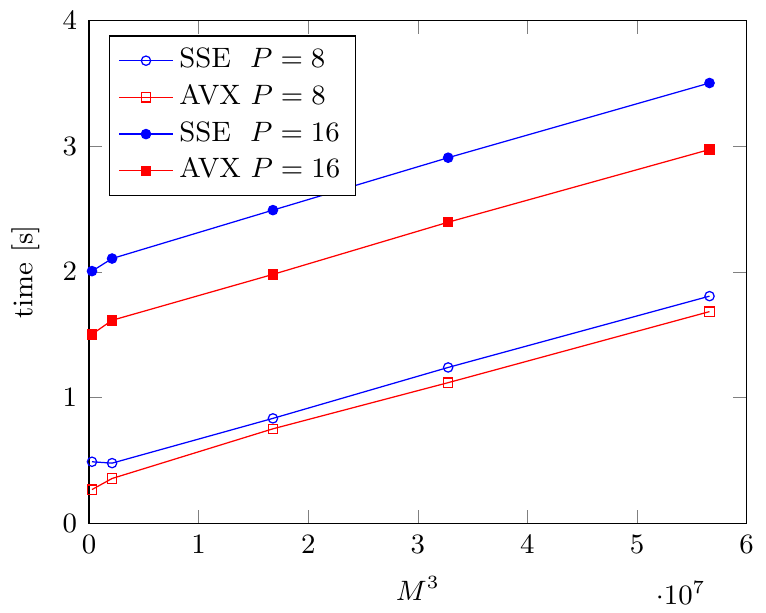}
\end{subfigure}
\hfill
  \begin{subfigure}[b]{0.49\textwidth}
    \centering
    \includegraphics[width=\textwidth,height=.8\textwidth]{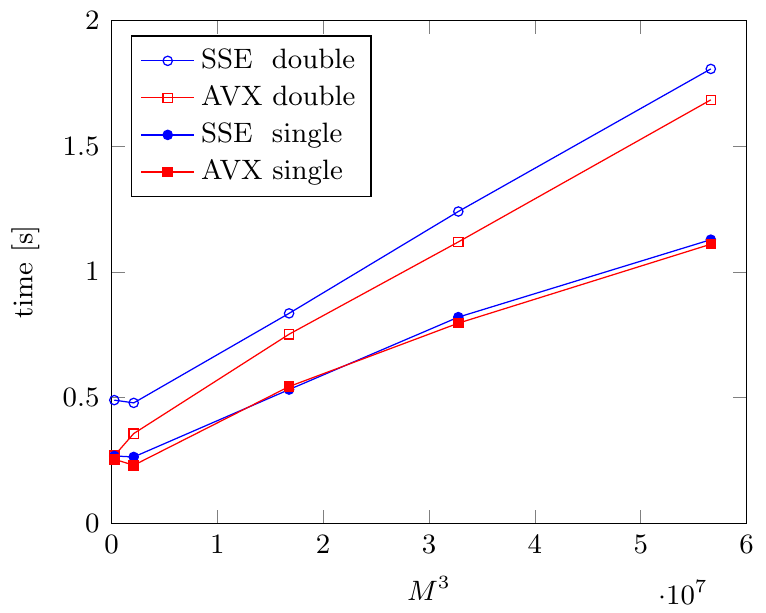}
\end{subfigure}
\caption{Fourier space runtime of the SE method as a function of the total grid size $M^3$. A cloud-wall system of $N=1\,012\,500$ particles in a box of length $L=150$ with $\xi=0.8267$ is used. Also $M=64,\ldots,400$. (left) SSE-AVX comparison with $P=8$ and 16 in double precision. (right) Single and double precision comparison using 8 CPU threads with $P=8$.}
\label{fig:avx:avx_sse_comparison}
\end{figure}

So far we have performed all experiments in double precision, but as a realistic comparison for low accuracy demands, single precision has to be used. We consider the same system as in the previous example with the same Ewald parameter and set $P=8$. The routine for $P=8$ can be accelerated using AVX and SSE instructions. Figure \ref{fig:avx:avx_sse_comparison} (right) shows a runtime comparison of the SE method in single and double precision as a function of the total grid size $M^3$. We clearly gain a factor of 2 using single precision, however, the difference between using AVX and SSE is minor in this case.

Now, we can compare the runtime of both SE and SPME methods using AVX and in single precision. We choose the same system and Ewald parameter as before and use $p=3$ and 5 for the SPME method and $P=8$ for the SE method. Figure \ref{fig:avx:avx_single_pme_se} shows the relative rms error as a function of runtime. Note that, in contrast to the right plot in figure \ref{fig:parallel:wall8_cores}, the result here is obtained with a fixed $P$. Therefore, the blue curve (SE method) levels out for large $M$ and to achieve a higher accuracy, $P$ has to be increased.
\begin{figure}[htbp]
  \tikzset{mark size=1.5}
  \centering 
  \includegraphics[width=0.5\textwidth,height=.4\textwidth]{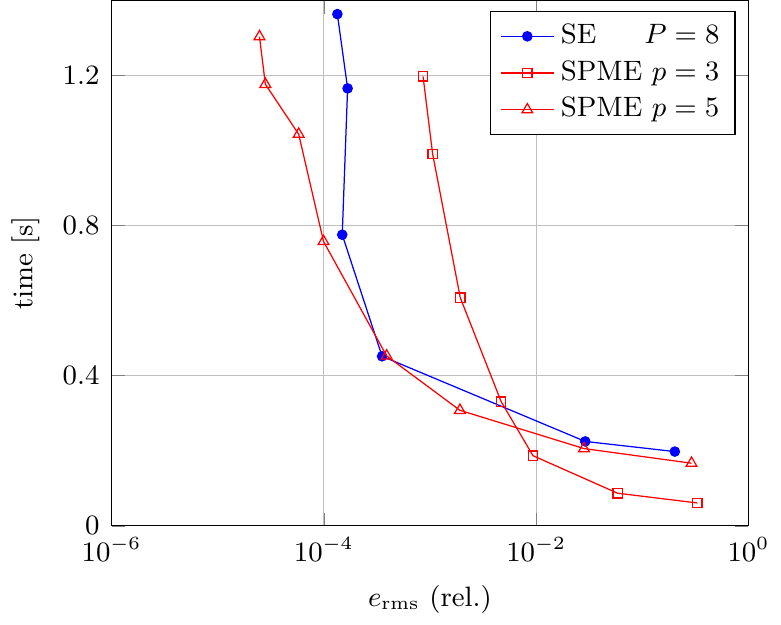}
  \caption{Fourier space runtime comparison of the SE ($P=8$) and SPME ($p=3,5$) methods with AVX intrinsics enabled in single precision using 8 CPU threads. A cloud-wall system of $N=1\,012\,500$ particles in a box of length $L=150$ with $\xi=0.8267$ is used. Also $M=64,\ldots,400$.}
  \label{fig:avx:avx_single_pme_se}
\end{figure}

To complete our benchmarks and investigate the performance, we compare both methods on Beskow supercomputer at KTH Royal Institute of Technology, Sweden. Beskow is a Cray XC40 system, based on Intel Haswell processors and consists of 53632 compute cores. Haswell processors support AVX2 as well as \textit{Fused Multiply Add} (FMA) instructions. This is more favorable for the SE method since there are more multiply-add operations in the spreading/gathering step compared to the SPME method.

We use a uniform system of $N=1\,029\,000$ particles with $L=22.2698$ and set $\xi=6.5$. We, furthermore, set an error tolerance of $10^{-5}$. To achieve a relative rms error less than the set tolerance, we choose $P=16$ and $M=320$ for the SE method and $p=7$ and $M=500$ for the SPME method. Note that with $P=10$, the same error tolerance can be reached, but the performance is degraded. 

GROMACS is also compiled and accelerated with AVX2 SIMD instructions. Figure \ref{fig:beskow} shows the runtime comparison of both methods for up to $256$ MPI ranks. Even though the SE method works only marginally better than the SPME method this difference is more significant if the time iteration is to be considered. In the right plot of figure \ref{fig:beskow}, we break down the cost of FFT and spreading/gathering steps for both methods. It is worth mentioning that, for a fixed grid size, using a very large number of cores can reduce the performance since the cost of communications in the FFTs might dominate the computational time. This is more evident in the SE method (blue circle-dotted line) as the grid is smaller.

\begin{figure}[htbp]
  \tikzset{mark size=1.5}
  \begin{subfigure}[b]{0.49\textwidth}
    \centering
    \includegraphics[width=\textwidth,height=.8\textwidth]{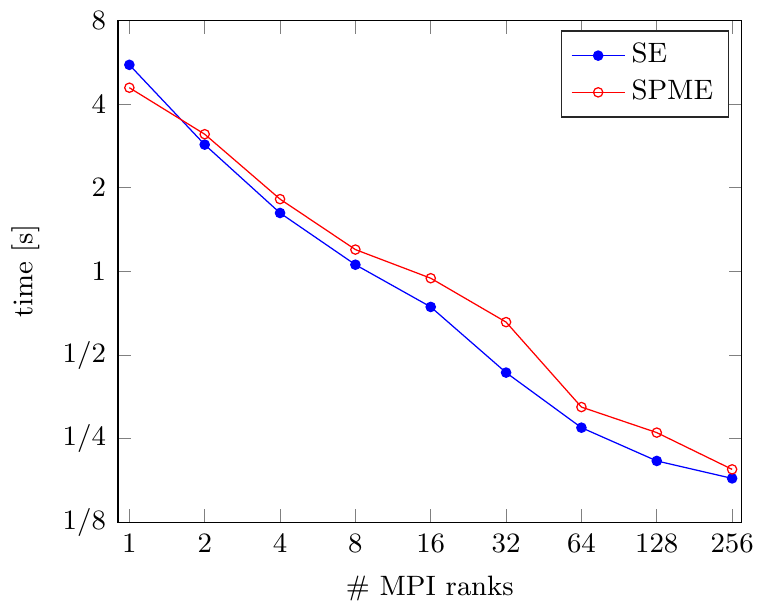}
\end{subfigure}
\hfill
  \begin{subfigure}[b]{0.49\textwidth}
    \centering
    \includegraphics[width=\textwidth,height=.8\textwidth]{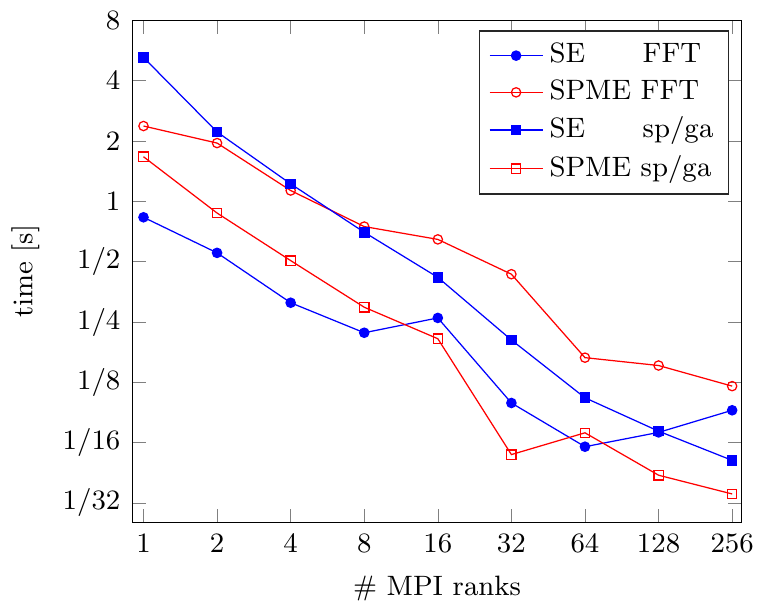}
\end{subfigure}
\caption{(left) Runtime and (right) breakdown runtime of the SE and SPME methods in computing the electrostatic force to achieve a relative rms error of $\approx10^{-5}$. Computation is done on Beskow supercomputer in single precision and with up to 256 MPI ranks. Also $P=16$, $M=320$ for the SE method and $p=7$, $M=500$ for the SPME method and $\xi=6.5$. A uniform system of $N=1\,029\,000$ particles with $L=22.2698$ is used.}
\label{fig:beskow}
\end{figure}

\section{Conclusions}
In this work we compared our implementation of the SE method
introduced by Lindbo $\&$ Tornberg in \cite{se} and a highly optimized
version of the SPME method \cite{spme} in GROMACS \cite{gromacs} version 5.1.
The two methods belong to the family of Particle Mesh Ewald methods,
and are similar in their structure. However, by the different choice
of window functions - suitably scaled and truncated Gaussians vs
$p$th order B-splines ($p=3,5,7$), significant differences do arise.

Approximation errors in the SE method are essentially
independent of the FFT grid size and can be controlled by increasing
the number of points of support in the Gaussian while simultaneously
rescaling it. Whereas in the SPME method approximation errors decay algebraically as $h^p$ as
the grid size h is decreased. This means that smaller FFT grids can
be used in the SE method as compared to the SPME method. On the other
hand, the gridding and spreading steps that use the window functions
for interpolation, are more costly for the SE method.

Both methods can use the same routine for the evaluation of the real
space sum, so the comparison really entails the evaluation of the
$k$-space sum. However, it is desirable to balance the work done for the
two parts, and hence the implementation for the real space sum will
effect the choice of the Ewald parameter $\xi$. As we have seen in the
results section (figure \ref{fig:se_spme_time_comparison:time_xi_se_spme_1e5}), 
the $k$-space runtime for the SPME method is much more sensitive to the 
choice of $\xi$ than the SE method.

The comparison between the two methods will depend on the specific
problem, the value of $\xi$, the accuracy requirement, and also on the
implementation. Generally speaking, the SE method will be most
efficient if high accuracy is required, whereas the SPME method will
be most efficient for low accuracy demands. The choice of $p=3$ for the
B-splines is essentially only competitive with the other choices if
a very low accuracy is sufficient, and it also shows the highest
sensitivity in the choice of $\xi$, which makes it less attractive to
work with.

Our benchmarks include system of particles with periodic boundary
conditions and different number of particles, box sizes and
topologies. We ran GROMACS on a regular desktop
machine with an Intel Core i7-3770 CPU as well as up to 256 MPI ranks
on a supercomputers with AVX2 support.

Both algorithms were accelerated using SIMD instructions in single
and double precision. The spreading/interpolating steps can be done by
either computing the effect of all particles on a specific grid point
or by computing all the grid values for one particle at the time. In
our implementation, we choose the latter case. From the computer
science point of view, this case has the benefit of writing on a
consecutive memory locations. It is also advantageous for the
distributed memory architectures though it should be revised again
while using Graphics processing Units (GPU).

Our benchmarks show that the typical relative rms error below
which the SE method becomes more efficient than the SPME method is
around $10^{-6}$. We however observed that for some cases where the system
is large enough and long range interactions are strong, it can be
competitive for accuracies around $10^{-5}$, and whenever SIMD instructions
are used in single precision, both methods are comparable
even for low accuracy demands. On the Beskow computer, a runtime
comparison for a system of around $10^{6}$ particles at a relative error
level of $10^{-5}$ showed the SE method to be somewhat faster than the
SPME method already from 2 MPI ranks, scaling up to 256. 

Already at this level of optimization, we can see that the SE method
can be competitive with the SPME method, and we expect to be able to
make further enhancements and accelerations of the method. As of yet,
the implementation of the SE method is not available as an integrated
part of GROMACS, but is publicly available at \cite{gromacs_se_github}.

\section*{Acknowledgement}
This work has been supported by the the Swedish Research Council under grant no. 2011-3178 and by the Swedish e-Science Research Center. The authors gratefully acknowledge this support.

\bibliographystyle{abbrvnat_mod}
\bibliography{library}

\end{document}